\newtheorem{theorem}{Theorem}[section]
\theoremstyle{plain}
\newtheorem{definition}{Definition}[section]
\newtheorem{example}{Example}[section]
\newtheorem{lemma}{Lemma}[section]
\newtheorem{proposition}{Proposition}[section]
\newtheorem{remark}{Remark}[section]
\numberwithin{equation}{section} \textheight  22 true cm \textwidth  15 true cm \setlength{\oddsidemargin}{0mm} \setlength{\evensidemargin}{0mm}
\begin{document}
\title[conformal generic submersions ...]{\small{Conformal Generic Submersions With Total Space 
an almost Hermitian manifold}}
\author[Akyol]{M\lowercase{ehmet} A\lowercase{kif} AKYOL }
\address{Bing\"{o}l University, Faculty of Science and Arts, Department of Mathematics, 12000, Bing\"{o}l, Turkey}
\email{mehmetakifakyol@bingol.edu.tr}
\subjclass[2010]{Primary
53C43; Secondary 53C20} \keywords{K\"{a}hler manifold, Riemannian
submersion, Generic Riemannian submersion, Conformal submersion,
Conformal generic submersion, Vertical distribution.}

\begin{abstract}
Akyol, M. A and \c{S}ahin, B. [Conformal semi-invariant submersions, Commun. Contemp. Math. 19, 1650011 (2017).] 
introduced the notion  of conformal semi-invariant submersions from almost Hermitian manifolds. 
The present paper deal with the study of conformal generic submersions from almost
Hermitian manifolds which extends semi-invariant submersions, generic Riemannian 
submersions and conformal semi-invariant submersions a natural way. 
We mention some examples of such maps and
obtain characterizations and investigate some properties, including
the integrability of distributions, the geometry of foliations and totally geodesic foliations. Moreover, we obtain some conditions  
for such submersions to be totally geodesic and harmonic, respectively.
\end{abstract}

\maketitle

\section{\textbf{Introduction}}

Let $\tilde{M}$ be an almost Hermitian manifold with almost complex structure $%
J$ and $M$ a Riemannian manifold isometrically immersed in $\tilde{M}.$ We
note that submanifolds of a K\"{a}hler manifold are determined by the
behavior of tangent bundle of the submani-fold under the action of the almost
complex structure of the ambient manifold. A submanifold $M$ is called
holomorphic(complex) if $J(T_{q}M)\subset T_{q}M,$ for every $q\in M,$ where
$T_{q}M$ denotes the tangent space to $M$ at the point $q.$ $M$ is called
totally real if $J(T_{q}M)\subset T^{\perp}_{q}M,$ for every $q\in M, $
where $T^{\perp}_{q}M$ denotes the normal space to $M$ at the point $q.$ As
a generalization of holomorphic and totally real submanifolds,
$CR-$submanifolds were introduced by Bejancu \cite{Bejancu}. A $CR-$submanifold $%
M$ of an almost Hermitian manifold $\tilde{M}$ with an almost complex
structure $J$ requires two orthogonal complementary distributions $\mathcal{D}$ and $%
\mathcal{D}^{\perp}$ defined on $M$ such that $\mathcal{D}$ is invariant under $J$ and
$\mathcal{D}^{\perp} $ is totally real \cite{Bejancu}. There is yet another
generalization of $CR-$submanifolds known as generic submanifolds
\cite{Chen}. These submanifolds are defined by relaxing the
condition on the complementary distribution of holomorphic
distribution. Let $M$ be a real submanifold of an almost Hermitian
manifold $\tilde{M},$ and let $\mathcal{D}_q=T_{q}M\cap JT_{q}M$ be the maximal
holomorphic subspace of $T_{q}M.$ If $\mathcal{D}:q\longrightarrow \mathcal{D}_q$
defines a smooth holomorphic distribution on $M,$ then $M$ is called a
generic submanifold of $\tilde{M}.$ The complementary distribution
$\mathcal{D}^\perp$ of $\mathcal{D}$ is called purely real distribution on $M.$ A
generic submanifold is a $CR-$submanifold if the purely real
distribution on $M$ is totally real. A purely real distribution
$\mathcal{D}^\perp$ on a generic submanifold $M$ is called proper if it is not
totally real. A generic submanifold is called proper if purely real
distribution is proper. Generic submanifolds have been studied
widely by many authors and the theory of such submanifolds is still
an active research area, see \cite{CKT}, \cite{DSC}, \cite{DS},
\cite{KCT}, \cite{Kon}, \cite{VilcuCCM} for  recent papers on this
topic.

The notion of Riemannian submersions between Riemannian manifolds were studied by
O'Neill \cite{O} and Gray \cite{Gray}. Later on, such submersions
have been studied widely in differential geometry. Riemannian
submersions between Riemannian manifolds equipped with an additional
structure of almost complex type was firstly studied by Watson
\cite{W}. Watson defined an almost Hermitian submersion between
almost Hermitian manifolds and he showed that the base manifold and
each fiber have the same kind of structure as the total space, in
most cases. We note that almost Hermitian submersions have been
extended to the almost contact manifolds \cite{Chi3}, locally
conformal K\"{a}hler manifolds \cite{MR}, quaternionic K\"{a}hler
manifolds \cite{IMV}, paraquaternionic manifolds \cite{C},
\cite{V} and statistical manifolds \cite{Vilcu-Vilcu}. 

Recently, \c{S}ahin \cite{S2} introduced the notion of
semi-invariant Riemannian submersions as a generalization
of anti-invariant Riemannian submersions \cite{S1} from almost Hermitian manifolds 
onto Riemannian manifolds. Later such submersions and their extensions
are studied \cite{Akyol1}, \cite{AG},  \cite{OST}, \cite{Park}, \cite{Park1}, \cite{S4} and \cite{S5}.
As a generalization of semi-invariant submersions, Ali and Fatima \cite{Shahid} 
introduced the notion of generic Riemannian submersions. (see also \cite{Akyol}).

On the other hand, A related topic of growing interest deals with the study of the so-called horizontally conformal submersions: 
these maps, which provide a natural generalization of Riemannian submersion, introduced independently Fuglede \cite{F} and Ishihara \cite{I}.
 As a generalization of holomorphic submersions, the notion of conformal holomorphic submersions were defined by Gudmundsson and Wood \cite{GW}.
 In 2017, Akyol and \c{S}ahin \cite{As} defined a conformal semi-invariant submersion from an almost Hermitian manifolds onto a riemannian manifold. 
 In this paper, we introduce conformal generic
submersions as a generalization of semi-invariant submersions, generic Riemannian submersions
and conformal semi-invariant submersions, investigate the geometry
of the total space and the base space for the existence of such
submersions.

The present article is organized as follows. 
In Section 2, we give some background about conformal submersions and the second fundamental maps. 
In Section 3, we define and study conformal generic submersions from almost Hermitian manifolds onto
Riemannian manifolds, give examples and investigate the geometry of leaves
of the horizontal distribution and the vertical distribution. In this
section we also show that there are certain product structures on the total
space of a conformal generic submersion. In the last section of this paper, we find necessary and
sufficient conditions for a conformal generic submersion to be totally
geodesic and harmonic, respectively.

\section{\textbf{Preliminaries}}
The manifolds, maps, vector fields etc. considered in this paper are assumed to be smooth, i.e. differentiable of class $C^{\infty}$.
\subsection{Conformal submersions}
Let $\psi:(M_1,g_{1})\longrightarrow (M_2,g_{2})$ be a smooth map between Riemannian manifolds, and let $p\in M_1$. Then $\psi$ is
called \textit{horizontally weakly conformal or semi conformal} at $p$ \cite{BW} if either (i) $d\psi_{p}=0$, or
(ii) $d\psi_{p}$ is surjective and there exists a number $\Lambda(p)\neq0$ such that
$$
g_2(d\psi_{p}\xi,d\psi_{p}\eta)=\Lambda(p)g_1(\xi,\eta)\mbox{ }(\xi,\eta\in\mathcal{H}_{p}).
$$
We call the point $p$ is of type (i) as a critical point if it satisfies the type (i), and we shall call the point $p$ a regular point if it satisfied the type (ii).
At a critical point, $d\psi_{p}$ has rank $0$; at a regular point, $d\psi_{p}$ has rank $n$
and $\psi$ is submersion. Further, the positive number $\Lambda(p)$ is called the \textit{square dilation} (of $\psi$ at $p$). The map $\psi$ is called \textit{horizontally weakly
conformal} or \textit{semi conformal} (on $M_1$) if it is horizontally weakly
conformal at every point of $M_1$ and it has no critical point, then we call it a (\textit{horizontally conformal submersion}).

A vector field $\xi_{1}\in\Gamma(TM_1)$ is called a \textit{basic vector field} if $\xi_{1}\in\Gamma(({\rm kerd}\psi)^\perp)$ and $\psi-$related with a vector field $\bar{\xi_{1}}\in\Gamma(TM_2)$ which means that $({{\rm d}\psi}_{p}\xi_{1p})=\bar{\xi_{1}}({\rm d}\psi(p))\in\Gamma(TM_2)$ for any $p\in\Gamma(TM_1).$

Define $\mathcal{T}$ and $\mathcal{A},$ which are O'Neill's tensors, as follows
\begin{equation}
\mathcal{A}_{E_1}E_2=\mathcal{V}\nabla^{1}_{\mathcal{H}E_1}\mathcal{H}E_2+\mathcal{H}%
\nabla^{1} _{\mathcal{H}E_1}\mathcal{V}E_2  \label{A}
\end{equation}%
\begin{equation}
\mathcal{T}_{E_1}E_2=\mathcal{H}\nabla^{1}_{\mathcal{V}E_1}\mathcal{V}E_2+\mathcal{V}%
\nabla^{1} _{\mathcal{V}E_1}\mathcal{H}E_2  \label{T}
\end{equation}
where $\mathcal{V}$ and $\mathcal{H}$ are the vertical and horizontal
projections (see \cite{FIP}). On the other hand, from (\ref{A}) and (\ref{T}), we have%
\begin{equation}
\nabla^{1}_{V}W=\mathcal{T}_{V}W+\hat{\nabla}_{V}W  \label{nvw}
\end{equation}%
\begin{equation}
\nabla^{1}_{V}\xi=\mathcal{H}\nabla^{1}_{V}\xi+\mathcal{T}_{V}\xi  \label{nvx}
\end{equation}%
\begin{equation}
\nabla^{1}_{\xi}V=\mathcal{A}_{\xi}V+\mathcal{V}\nabla^{1}_{\xi}V  \label{nxv}
\end{equation}%
\begin{equation}
\nabla^{1}_{\xi}\eta=\mathcal{H}\nabla^{1}_{\xi}\eta+\mathcal{A}_{\xi}\eta  \label{nxy}
\end{equation}
for $\xi,\eta\in\Gamma(({\rm kerd}\psi)^{\perp})$ and $V,W\in\Gamma({\rm kerd}\psi),$
 where $\hat{\nabla}_{V}W=\mathcal{V}\nabla^{1}_{V}W$. If
$\xi$ is basic, then $\mathcal{H}\nabla^{1}_{V}\xi=\mathcal{A}_{\xi}V$.

It is easily seen that for $q\in M_1,$ $\xi\in\mathcal{H}_{q}$ and $V \in
\mathcal{V}_{q}$ the linear operators $\mathcal{T}_{V},\mathcal{A}_{\xi}:T_{q}M_1\rightarrow T_{q}M_1$ are skew-symmetric, that is%
\begin{equation*}
-g_1(\mathcal{T}_{V}E_1,E_2)=g_1(E_1,\mathcal{T}_{V}E_2)\text{ and }-g_1(\mathcal{A}_{\xi}E_1,E_2)=g_1(E_1,\mathcal{A}_{\xi}E_2)
\end{equation*}
for all $E_1,E_2\in T_{q}M_1$. We also see that the restriction of $\mathcal{T}$ to the
vertical distribution $\mathcal{T}\mid_{{\rm kerd}\psi\times {\rm kerd}\psi}$ is exactly the
second fundamental form of the fibres of $\psi$. Since $\mathcal{T}_{V}$ is
skew-symmetric we get: $\psi$ has totally geodesic fibres if and only if $\mathcal{T}\equiv0$.\newline

Let $(M_1,g_{1})$ and $(M_1,g_{2})$ be Riemannian manifolds and suppose that $%
\psi:M_1\rightarrow M_2$ is a smooth map between them. Then the differential of
${\rm d}\psi$ of $\psi$ can be viewed a section of the bundle $%
Hom(TM_1,\psi^{-1}TM_2)\rightarrow M_1,$ where $\psi^{-1}TM_2$ is the pullback
bundle which has fibres $(\psi^{-1}TM_2)_{p}=T_{\psi(p)}M_2,$ $p\in M_1$. $%
Hom(TM_1,\psi^{-1}TM_2)$ has a connection $\nabla$ induced from the Levi-Civita
connection $\nabla^{M_1}$ and the pullback connection. Then the second
fundamental form of $\psi$ is given by%
\begin{equation}
(\nabla {\rm d}\psi)(\xi,\eta)=\nabla_{\xi}^{\psi}{\rm d}\psi(\eta)-{\rm d}\psi(\nabla^{1}_{\xi}\eta)  \label{nfixy}
\end{equation}
for $\xi,\eta\in\Gamma(TM_1),$ where $\nabla^{\psi}$ is the pullback connection. It
is known that the second fundamental form is symmetric.

\begin{lemma}\cite{Urakawa}
Let $(M,g_{M})$ and $(N,g_{N})$ be Riemannian manifolds and suppose that $%
\psi:M \longrightarrow N$ is a smooth map between them. Then we have
\begin{equation}
\nabla^{\psi}_{\xi}{\rm d}\psi(\eta)-\nabla^{\psi}_{\eta}{\rm d}\psi(\xi)-{\rm d}\psi([\xi,\eta])=0 \label{U}
\end{equation}
for $\xi,\eta\in\Gamma(TM)$.
\end{lemma}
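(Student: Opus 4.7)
The plan is to derive the identity directly from the definition of the second fundamental form together with the torsion-freeness of the Levi-Civita connection.

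First, I would invoke formula~(\ref{nfixy}) for both orderings of $\xi,\eta$:
\begin{equation*}
(\nabla {\rm d}\psi)(\xi,\eta)=\nabla_{\xi}^{\psi}{\rm d}\psi(\eta)-{\rm d}\psi(\nabla^{M}_{\xi}\eta),\qquad
(\nabla {\rm d}\psi)(\eta,\xi)=\nabla_{\eta}^{\psi}{\rm d}\psi(\xi)-{\rm d}\psi(\nabla^{M}_{\eta}\xi).
\end{equation*}
Then I would appeal to the symmetry of the second fundamental form (noted in the text immediately after~(\ref{nfixy})), namely $(\nabla {\rm d}\psi)(\xi,\eta)=(\nabla {\rm d}\psi)(\eta,\xi)$. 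Subtracting the two displayed equations and using this symmetry yields
\begin{equation*}
\nabla_{\xi}^{\psi}{\rm d}\psi(\eta)-\nabla_{\eta}^{\psi}{\rm d}\psi(\xi)={\rm d}\psi\bigl(\nabla^{M}_{\xi}\eta-\nabla^{M}_{\eta}\xi\bigr).
\end{equation*}

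Second, since $\nabla^{M}$ is the Levi-Civita connection of $g_{M}$, it is torsion-free, so
$\nabla^{M}_{\xi}\eta-\nabla^{M}_{\eta}\xi=[\xi,\eta]$.
Substituting this into the right-hand side and transposing gives exactly~(\ref{U}).

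The only substantive input is the symmetry $(\nabla{\rm d}\psi)(\xi,\eta)=(\nabla{\rm d}\psi)(\eta,\xi)$, which itself follows from a short tensorial computation: one checks that the right-hand side of~(\ref{nfixy}) is $C^{\infty}(M)$-bilinear in $(\xi,\eta)$ (so it is a genuine tensor), then verifies symmetry on a local frame of coordinate fields $\partial_{i},\partial_{j}$, where $[\partial_{i},\partial_{j}]=0$ reduces symmetry to the identity of mixed partial derivatives of $\psi$. Thus there is no real obstacle, and the main conceptual point is simply the combination of the symmetry of $\nabla{\rm d}\psi$ with the vanishing torsion of $\nabla^{M}$; the rest is bookkeeping.
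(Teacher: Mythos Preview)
Your argument is correct and is precisely the standard derivation: subtract the two instances of~(\ref{nfixy}), invoke the symmetry of the second fundamental form, and use that $\nabla^{M}$ is torsion-free. Note, however, that the paper does not supply its own proof of this lemma; it is stated with a citation to \cite{Urakawa} and used as a known fact, so there is nothing further to compare against.
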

\begin{remark}
From (\ref{U}), one can easily see that if $\xi$ is basic and 
$\eta\in\Gamma(({\rm kerd}\psi)^\perp)$, then $[\xi,\eta]\in\Gamma({\rm kerd}\psi).$
\end{remark}
Finally, we have the following from \cite{BW}
\begin{lemma}
\label{lem1}(Second fundamental form of an HC submersion) Suppose that $%
\psi:M_1\rightarrow M_2$ is a horizontally conformal submersion. Then, for
any horizontal vector fields $\xi,\eta$ and vertical vector fields $V,W,$ we have 
\begin{enumerate}
\item[(i)]$ (\nabla {\rm d}\psi)(\xi,\eta)=\xi(\ln\lambda){\rm d}\psi\eta+\eta(\ln\lambda){\rm d}\psi\xi-g(\xi,\eta){\rm d}\psi(\nabla\ln\lambda); $
\item[(ii)]$ (\nabla {\rm d}\psi)(V,W)=-{\rm d}\psi(\mathcal{T}_{V}W); $
\item[(iii)]$ (\nabla {\rm d}\psi)(\xi,V)=-{\rm d}\psi(\nabla_{\xi}^{1}V)=-{\rm d}\psi(\mathcal{A}_{\xi}V).$
\end{enumerate}
\end{lemma}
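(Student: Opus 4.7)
The plan is to dispose of (ii) and (iii) first as immediate consequences of the definition (\ref{nfixy}), then establish (i) by a Koszul-type cyclic computation exploiting horizontal conformality.

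For (ii), with $V,W$ vertical, I would use $d\psi(V)=d\psi(W)=0$ to kill the term $\nabla_{V}^{\psi}d\psi(W)$ in (\ref{nfixy}), reducing it to $(\nabla d\psi)(V,W)=-d\psi(\nabla_{V}^{1}W)$. Equation (\ref{nvw}) splits $\nabla_{V}^{1}W=\mathcal{T}_{V}W+\hat{\nabla}_{V}W$; the second term is vertical, hence in $\ker d\psi$, which gives (ii). Part (iii) follows identically: $d\psi V=0$ forces $\nabla_{\xi}^{\psi}d\psi(V)=0$, and the decomposition $\nabla_{\xi}^{1}V=\mathcal{A}_{\xi}V+\mathcal{V}\nabla_{\xi}^{1}V$ from (\ref{nxv}) kills the vertical part under $d\psi$, yielding both equalities.

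For (i), I would take basic horizontal vector fields $\xi,\eta,\zeta$ and differentiate the horizontal conformality identity $g_{2}(d\psi\xi,d\psi\eta)=\lambda^{2}g_{1}(\xi,\eta)$ along $\zeta$. Using metric compatibility of $\nabla^{\psi}$ with $g_{2}$ and of $\nabla^{1}$ with $g_{1}$, then substituting $\nabla_{\zeta}^{\psi}d\psi\xi=(\nabla d\psi)(\zeta,\xi)+d\psi(\nabla_{\zeta}^{1}\xi)$ from (\ref{nfixy}) (and similarly for $\eta$), together with $g_{2}(d\psi\nabla_{\zeta}^{1}\xi,d\psi\eta)=\lambda^{2}g_{1}(\nabla_{\zeta}^{1}\xi,\eta)$ (valid since $\eta$ is horizontal so only the horizontal component of $\nabla_{\zeta}^{1}\xi$ contributes), one arrives at
\[
g_{2}\bigl((\nabla d\psi)(\zeta,\xi),d\psi\eta\bigr)+g_{2}\bigl(d\psi\xi,(\nabla d\psi)(\zeta,\eta)\bigr)=2\lambda^{2}\zeta(\ln\lambda)\,g_{1}(\xi,\eta).
\]
Cyclically permuting $\xi,\eta,\zeta$ in this identity and combining the three versions with signs $(+,+,-)$, the four off-diagonal terms involving $(\nabla d\psi)(\xi,\zeta)$ and $(\nabla d\psi)(\eta,\zeta)$ cancel in pairs thanks to the symmetry of the second fundamental form recorded just before Lemma \ref{lem1}, leaving
\[
g_{2}\bigl((\nabla d\psi)(\xi,\eta),d\psi\zeta\bigr)=\lambda^{2}\bigl[\xi(\ln\lambda)g_{1}(\eta,\zeta)+\eta(\ln\lambda)g_{1}(\xi,\zeta)-g_{1}(\xi,\eta)\zeta(\ln\lambda)\bigr].
\]

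Each term on the right I would then rewrite using $\lambda^{2}g_{1}(X,\zeta)=g_{2}(d\psi X,d\psi\zeta)$ for horizontal $X$, noting that $\lambda^{2}\zeta(\ln\lambda)=g_{2}(d\psi(\nabla\ln\lambda),d\psi\zeta)$ because $\zeta$ being horizontal annihilates the vertical component of $\nabla\ln\lambda$. This identifies the right-hand side as $g_{2}\bigl(\xi(\ln\lambda)d\psi\eta+\eta(\ln\lambda)d\psi\xi-g_{1}(\xi,\eta)d\psi(\nabla\ln\lambda),d\psi\zeta\bigr)$, and since $d\psi|_{\mathcal{H}_{p}}$ is a linear isomorphism onto $T_{\psi(p)}M_{2}$, non-degeneracy of $g_{2}$ forces the identity (i). Tensoriality of both sides of (i) in $\xi$ and $\eta$ then extends the identity from basic horizontal fields to arbitrary horizontal fields. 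The main obstacle will be the bookkeeping in the cyclic cancellation and in this final rewriting, where one must keep careful track of the horizontal versus vertical components of $\nabla_{\zeta}^{1}\xi$ and of $\nabla\ln\lambda$; parts (ii) and (iii), by contrast, require essentially no work once (\ref{nvw}) and (\ref{nxv}) are in hand.
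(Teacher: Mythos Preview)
The paper does not prove this lemma at all: it is quoted verbatim from Baird--Wood \cite{BW} with no argument supplied, so there is nothing in the paper to compare your proposal against. That said, your proof is correct. Parts (ii) and (iii) are indeed one-line consequences of the definition (\ref{nfixy}) combined with (\ref{nvw}) and (\ref{nxv}). Your Koszul-type argument for (i) is clean and goes through exactly as you describe; the cyclic combination with signs $(+,+,-)$ does leave precisely $2g_{2}\bigl((\nabla d\psi)(\xi,\eta),d\psi\zeta\bigr)$ after the symmetry cancellations, and the final rewriting via $\lambda^{2}g_{1}(\cdot,\zeta)=g_{2}(d\psi\cdot,d\psi\zeta)$ is valid because $d\psi$ kills vertical parts. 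One minor remark: you do not actually need the horizontal fields to be basic anywhere in the derivation of the cyclic identity, since compatibility of $\nabla^{\psi}$ with the pulled-back metric $g_{2}$ holds for arbitrary vector fields on $M_{1}$; the appeal to tensoriality at the end is therefore harmless but not strictly necessary.
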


\section{\textbf{Conformal generic submersions from almost Hermitian manifolds}}

In this section, we define conformal generic submersions from an almost
Hermitian manifold onto a Riemannian manifold, give lots of examples and investigate the geometry of leaves
of distributions and show that there are certain product structures on the total
space of a conformal generic submersion. 

Let $(M_1,g_1,J)$ be an almost Hermitian manifold with almost complex structure $J$ and a Riemannian
metric $g$ such that \cite{YK3}
\begin{equation}
(i) \ J^{2}=-I,\mbox{ } \ \ \ \ (ii) \ g(Z_1,Z_2)=g(JZ_1,JZ_2),  \label{J}
\end{equation}
for all vector fields $Z_1$, $Z_2$ on $M_1,$ where $I$ is the identity map.
An almost Hermitian manifold $M_1$ is called K\"{a}hler manifold if the almost complex structure $J$ satisfies
\begin{equation}
(\nabla_{Z_1}J)Z_2=0,\mbox{ }\forall Z_1,Z_2\in\Gamma(TM_1),
\label{e.q:2.2}
\end{equation}
where $\nabla$ denotes the Levi-Civita connection on $M_1$.

First of all, we recall the definition of generic Riemannian submersions as follows:

\begin{definition}\cite{Shahid}
Let $N_1$ be a complex $m$-dimensional almost Hermitian manifold
with Hermitian metric $h_1$ and almost complex structure $J_1$ and $N_2$ be
a Riemannian manifold with Riemannian metric $h_{2}.$ A Riemannian 
submersion $\psi:N_1 \longrightarrow N_2$ is called 
generic Riemannian submersion if there is a distribution $\mathcal{\bar{D}}_{1}\subseteq \ker d\psi$ such that
\begin{equation*}
ker d\psi=\mathcal{\bar{D}}_{1}\oplus \mathcal{\bar{D}}_{2} \ \ J(\mathcal{\bar{D}}_{1})=\mathcal{\bar{D}}_{1},
\end{equation*}
where $\mathcal{\bar{D}}_{2}$ is orthogonal complementary to $\mathcal{\bar{D}}_{1}$ in $(ker d\psi),$ and
is purely real distribution on the fibres of the submersion $\psi.$
\end{definition}
Now, we will give our definition as follows:

Let $\phi$ be a conformal submersion from an almost Hermitian manifold $(M,g,J) $ to a Riemannian manifold
$(B,h).$ Define $$\mathcal{D}_q=({\rm kerd}\phi \cap J({\rm kerd}\phi)), \ \ q\in M$$ 
the complex subspace of the vertical subspace $\mathcal{V}_q.$
\begin{definition}
\label{def} Let $\phi:(M,g,J) \longrightarrow (B,h)$ be a horizontally conformal submersion, where 
$(M,g,J)$ is an almost Hermitian manifold and $(B,h)$ is
a Riemannian manifold with Riemannian metric $h.$ If the dimension $\mathcal{D}_q$ is constant along $M$
and it defines a differential distribution on $M$ then we say that $\phi$ is conformal generic submersion.
A conformal generic submersion is purely real (respectively, complex) if  $\mathcal{D}_q=\{0\}$ (respectively, $\mathcal{D}_q={\rm kerd}\phi_q$). 
For a conformal generic submersion, the orthogonal complementary distribution $\mathcal{D}^\perp,$ called purely real distribution, satisfies
 \begin{equation}
{\rm kerd}\phi=\mathcal{D}\oplus \mathcal{D}^\perp,  \label{dphi}
\end{equation}
and
\begin{equation}
\mathcal{D}\cap \mathcal{D}^\perp=\{0\}. \label{dphi1}
\end{equation}
\end{definition}
\begin{remark}
It is known that the distribution ${\rm kerd}\phi$ is integrable.
Hence, above definition implies that the integral manifold (fiber) $%
\phi^{-1}(q) $, $q\in B$, of $kerd\phi$ is a generic submanifold of $M.$
For generic submanifolds, see:\cite{Chen}.
\end{remark}

First of all, we give lots of examples for conformal generic submersions
 from almost Hermitian manifolds to Riemannian manifolds.

\begin{example}
Every semi-invariant Riemannian submersion \cite{S2} $\phi$ from an almost Hermitian
manifold to a Riemannian manifold is a conformal generic submersion with $%
\lambda=1$ and $\mathcal{D}^\perp$ is a totally real distribution.
\end{example}

\begin{example}
Every slant submersion \cite{S3}  $\phi$ from an almost Hermitian
manifold to a Riemannian manifold is a conformal generic submersion such that 
$\lambda=1,$ $\mathcal{D}=\{0\}$ and $\mathcal{D}^\perp$ is a slant distribution.
\end{example}

\begin{example}
 Every semi-slant submersion  \cite{PP} $\phi$ from an almost Hermitian
manifold to a Riemannian manifold is a conformal generic submersion such that 
$\lambda=1$ and $\mathcal{D}^\perp$ is a slant distribution.
\end{example}

\begin{example}
 Every conformal semi-invariant submersion \cite{As} $\phi$ from an almost Hermitian
manifold to a Riemannian manifold is a conformal generic submersion such that 
$\mathcal{D}^\perp$ is a totally real distribution.
\end{example}

\begin{example}
 Every generic Riemannian submersion \cite{Shahid} $\phi$ from an almost Hermitian
manifold to a Riemannian manifold is a conformal generic submersion with $\lambda=1.$
\end{example}

\begin{remark}
We would like to point out that since 
conformal semi-invariant submersions include conformal holomorphic submersions and conformal anti-invariant submersions,
such conformal submersions are also examples of conformal generic submersions. 
We say that a conformal generic submersion is proper if $\lambda\neq1$ and $\mathcal{D}^\perp$ is neither complex nor purely real.
\end{remark}

In the following $\mathbb{R}^{2m}$ denotes the Euclidean
$2m$-space with the standard metric. Define the compatible almost complex structure $J$
 on $\mathbb{R}^{8}$ by%
\begin{align*}
&J\partial_1=\frac{1}{\sqrt{2}}(-\partial_3-\partial_2), J\partial_2=\frac{1}{\sqrt{2}}(-\partial_4+\partial_1), 
J\partial_3=\frac{1}{\sqrt{2}}(\partial_1+\partial_4), J\partial_4=\frac{1}{\sqrt{2}}(\partial_2-\partial_3),\\
&J\partial_5=\frac{1}{\sqrt{2}}(-\partial_7-\partial_6), J\partial_6=\frac{1}{\sqrt{2}}(-\partial_8+\partial_5), 
J\partial_7=\frac{1}{\sqrt{2}}(\partial_5+\partial_8), J\partial_8=\frac{1}{\sqrt{2}}(\partial_6-\partial_7)
\end{align*}
where $\partial_k=\frac{\partial}{\partial u_k},\ \ k=1,...,8$ and $(u_1,...,u_8)$ natural coordinates of $\mathbb{R}^{8}.$
\begin{example}
\label{exm1}Let $\phi:(\mathbb{R}^{8},g) \longrightarrow (\mathbb{R}^{2},h)$ be a submersion defined by
\begin{equation*}
\phi(u_{1},u_{2},...,u_{8})=(t_1,t_2),
\end{equation*}
where
$$t_1=e^{u_{1}}\sin u_{3}\ \  \textrm{and}\ \  t_2=e^{u_{1}}\cos u_{3}.$$ Then, the Jacobian matrix of $\phi$ is:
\[
d\phi=
\begin{bmatrix}
    e^{u_{1}}\sin u_{3}  & 0 & e^{u_{1}}\cos u_{3} & 0 & 0 & 0 & 0 & 0\\
    e^{u_{1}}\cos u_{3}  & 0 & -e^{u_{1}}\sin u_{3} & 0 & 0 & 0 & 0 & 0 \\
  \end{bmatrix}
.
\]
A straight computations yields
\begin{equation*}
kerd\phi=span\{T_{1}=\partial_{2},\ T_{2}=\partial_{4},\
T_{3}=\partial_{5},\ T_{4}=\partial_{6},\ T_{5}=\partial_{7},\
T_{6}=\partial_{8}\}
\end{equation*}%
and
\begin{align*}
(kerd\phi)^{\perp }\!=\!span\{& H_{1}\!=\!e^{u_{1}}\sin {u_{3}}\partial_{1}+e^{u_{1}}\cos {u_{3}}\partial_{3},
H_{2}\!=\!e^{u_{1}}\cos {u_{3}}\partial_{1}-e^{u_{1}}\sin {u_{3}}\partial_{3}\}.
\end{align*}%
Hence we get
$$JT_{3}\!=\!\frac{1}{\sqrt{2}}T_{4}+\frac{1}{\sqrt{2}}T_{5},\,\, JT_{4}\!=\!-\frac{1}{\sqrt{2}}T_{3}+\frac{1}{\sqrt{2}}T_{6},$$ $$JT_{5}\!=\!-\frac{1}{\sqrt{2}}T_{3}-\frac{1}{\sqrt{2}}T_{6},\,\,JT_{6}\!=\!-\frac{1}{\sqrt{2}}T_{4}+\frac{1}{\sqrt{2}}T_{5}$$
and
$$ JT_{1}=\frac{1}{\sqrt{2}}T_{2}-\frac{e^{-u_{1}}\sin u_{3}}{
\sqrt{2}}H_{1}-\frac{e^{-u_{1}}\cos u_{3}}{\sqrt{2}}H_{2},$$
$$JT_{2}=-\frac{1}{\sqrt{2}}T_{1}+\frac{e^{-u_{1}}\cos u_{3}}{\sqrt{2}}H_{1}-\frac{%
e^{-u_{1}}\sin u_{3}}{\sqrt{2}}H_{2},$$ where $J$ is the complex structure of $\mathbb{R}^8.$
It follows that $\mathcal{D}=span\{T_{3},T_{4},T_{5},T_{6}\}$ and $\mathcal{D}^\perp=span\{T_{1},T_{2}\}$. Also
by direct computations yields
$$d\phi(H_{1})=(e^{u_{1}})^{2}\partial v_{1} \ \  \ \textrm{and} \ \ \  d\phi(H_{2})=(e^{u_{1}})^{2}\partial v_{2}.$$
Hence, it is easy to see that
\begin{equation*}
g_{\mathbb{R}^2}(d\phi(H_{i}),d\phi(H_{i}))=(e^{u_{1}})^{2}g_{\mathbb{R}^8}(H_{i},H_{i}),\ \ i=1,2.
\end{equation*}%
Thus $\phi $ is a conformal generic submersion with $\lambda =e^{u_{1}}.$
\end{example}

\begin{example}
\label{exm2}Let $\psi:(\mathbb{R}^{8},g_1) \longrightarrow (\mathbb{R}^{2},g_2)$ be a submersion defined by
\begin{equation*}
\psi(v_{1},v_{2},...,v_{8})=\pi^{17}(\frac{-v_1+v_3}{\sqrt{2}},\frac{-v_1-v_3}{\sqrt{2}}).
\end{equation*}
Then $\psi$ is a conformal generic submersion with $\lambda =\pi^{17}.$

\end{example}

\begin{remark}
Throughout this paper, we assume that all horizontal vector fields are basic vector fields.
\end{remark}
Let $\phi$ be a conformal generic submersion from a K\"{a}hler manifold $%
(M,g,J)$ onto a Riemannian manifold $(B,h)$. Then for $%
Z\in\Gamma({\rm kerd}\phi)$, we write
\begin{equation}
JZ=\varphi Z+\omega Z  \label{jv}
\end{equation}
where $\varphi Z\in\Gamma({\rm kerd}\phi)$ and $\omega Z\in\Gamma(({\rm kerd}\phi)^\perp)$.
We denote the orthogonal complement of $\omega \mathcal{D}^\perp$ in $({\rm kerd}\phi)^\perp$ by $\mu.$ Then we have
\begin{equation}
({\rm kerd}\phi)^\perp=\omega \mathcal{D}^\perp \oplus \mu  \label{y}
\end{equation}
and that $\mu$ is invariant under $J.$ Also for $\xi\in\Gamma(({\rm kerd}\phi)^\perp)$, we write
\begin{equation}
J\xi=\mathcal{B}\xi+\mathcal{C}\xi  \label{jx}
\end{equation}
where $\mathcal{B}\xi\in\Gamma(\mathcal{D}^\perp)$ and $\mathcal{C}\xi\in\Gamma(\mu)$. From (\ref{jv}), (\ref{y}) and (\ref{jx}) we have the following 
\begin{proposition}
Let $\phi$ be a conformal generic submersion from a K\"{a}hler manifold $(M,g,J)$ onto a Riemannian manifold $(B,h)$. Then we have 
\begin{align*}
&(i) \ \varphi \mathcal{D}=\mathcal{D},\, \, \, \, \, (ii) \ \omega \mathcal{D}=0,\,\, \, \, \, (iii) \ \varphi \mathcal{D}^\perp\subset \mathcal{D}^\perp,\,\, \, \, \, 
(d) \ \mathcal{B}(({\rm kerd}\phi)^\perp)=\mathcal{D}^\perp, \\
&(a) \ \varphi^2+\mathcal{B}\omega=-id,\,\,(b) \ \mathcal{C}^2+\omega\mathcal{B}=-id,\,\, (c) \ \omega\varphi+\mathcal{C}\omega=0,\,\,
(d) \ \mathcal{B}\mathcal{C}+\varphi\mathcal{B}=0.
\end{align*}
\end{proposition}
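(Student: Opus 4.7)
The plan is to verify the first four assertions directly from the defining properties of the distributions $\mathcal{D}$ and $\mathcal{D}^{\perp}$ together with the skew-adjointness of $J$, and then to derive the four algebraic identities by expanding $J^{2}=-I$ and separating vertical and horizontal components using the decompositions (\ref{dphi}) and (\ref{y}).

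For $(i)$ and $(ii)$, the definition $\mathcal{D}_q={\rm kerd}\phi\cap J({\rm kerd}\phi)$ means $\mathcal{D}$ is $J$-invariant, so for $Z\in\mathcal{D}$ the vector $JZ$ is itself vertical; comparing with (\ref{jv}) forces $\omega Z=0$ and $\varphi Z=JZ\in\mathcal{D}$, and since $J|_{\mathcal{D}}$ is bijective this yields $\varphi\mathcal{D}=\mathcal{D}$. Assertion $(iii)$ comes from skew-adjointness: for $U\in\mathcal{D}^{\perp}$ and $Z\in\mathcal{D}$,
$$g(\varphi U,Z)=g(JU,Z)=-g(U,JZ)=0,$$
since $JZ\in\mathcal{D}$ while $U\perp\mathcal{D}$, so $\varphi U\in\mathcal{D}^{\perp}$. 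The inclusion $\mathcal{B}(({\rm kerd}\phi)^{\perp})\subseteq\mathcal{D}^{\perp}$ is the same skew-adjoint computation $g(\mathcal{B}\xi,Z)=g(J\xi,Z)=-g(\xi,JZ)=0$ for horizontal $\xi$. For the reverse inclusion I would argue by dimensions: the map $\omega|_{\mathcal{D}^{\perp}}$ is injective, because $\omega U=0$ would put $JU\in{\rm kerd}\phi$ and hence $U\in\mathcal{D}\cap\mathcal{D}^{\perp}=\{0\}$; moreover, $\ker\mathcal{B}$ is exactly the maximal $J$-invariant horizontal subspace $\mu$, so the rank-nullity formula gives $\dim\mathcal{B}(({\rm kerd}\phi)^{\perp})=\dim({\rm kerd}\phi)^{\perp}-\dim\mu=\dim(\omega\mathcal{D}^{\perp})=\dim\mathcal{D}^{\perp}$.

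The four algebraic identities will follow from applying $J$ twice. For any $Z\in{\rm kerd}\phi$, splitting $J(\varphi Z)$ via (\ref{jv}) and $J(\omega Z)$ via (\ref{jx}) yields
$$-Z = J(JZ) = \varphi^{2}Z + \omega\varphi Z + \mathcal{B}\omega Z + \mathcal{C}\omega Z;$$
taking the vertical component produces $(a)$ and the horizontal component produces $(c)$. Analogously, for a horizontal $\xi$,
$$-\xi = J(J\xi)=\varphi\mathcal{B}\xi+\omega\mathcal{B}\xi+\mathcal{B}\mathcal{C}\xi+\mathcal{C}^{2}\xi,$$
whose vertical part is the second $(d)$ and whose horizontal part is $(b)$.

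The main (and essentially only) obstacle is the equality in $\mathcal{B}(({\rm kerd}\phi)^{\perp})=\mathcal{D}^{\perp}$, where the reverse inclusion is not immediate from the definitions and requires the identification of $\ker\mathcal{B}$ with $\mu$ together with the injectivity of $\omega|_{\mathcal{D}^{\perp}}$; everything else amounts to routine bookkeeping from $J^{2}=-I$ and from the orthogonal decompositions of ${\rm kerd}\phi$ and $({\rm kerd}\phi)^{\perp}$.
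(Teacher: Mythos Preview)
Your argument is correct. The paper does not actually supply a proof of this proposition; it simply announces that the assertions follow from (\ref{jv}), (\ref{y}) and (\ref{jx}), so your verification is precisely the routine work that the paper is taking for granted. Your handling of the only nontrivial point---the surjectivity in $\mathcal{B}(({\rm kerd}\phi)^\perp)=\mathcal{D}^\perp$ via the injectivity of $\omega|_{\mathcal{D}^\perp}$ and the identification $\ker\mathcal{B}=\mu$---is sound.
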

Next, we easily have the following lemma:
\begin{lemma}
Let $(M,g,J)$ be a K\"{a}hler manifold and $(B,h)$ a Riemannian
mani-fold. Let $\phi:(M,g,J)\rightarrow (B,h)$ be a conformal generic
submersion. Then we have

\begin{enumerate}
\item[(i)]
\begin{align*}
\mathcal{C}\mathcal{H}\nabla^{1}_\xi\eta+\omega \mathcal{A}_{\xi}\eta&=\mathcal{A}_{\xi}\mathcal{B}\eta+\mathcal{H}\nabla^{1}_{\xi}\mathcal{C}\eta \\
\mathcal{B}\mathcal{V}\nabla^{1}_\xi\eta+\varphi \mathcal{A}_{\xi}\eta&=\mathcal{V}\nabla^{1}_{\xi}\mathcal{B}\eta+\mathcal{A}_{\xi}\mathcal{C}\eta,
\end{align*}
\item[(ii)]
\begin{align*}
\mathcal{C}\mathcal{T}_{Z}W+\omega\hat{\nabla}_{Z}W&=\mathcal{T}_{U}\varphi V+\mathcal{A}_{\omega W}Z \\
\mathcal{B}\mathcal{T}_{Z}W+\varphi\hat{\nabla}_{Z}W&=\hat{\nabla}_{Z}\varphi W+\mathcal{T}_{Z}\omega W,
\end{align*}
\item[(iii)]
\begin{align*}
\mathcal{C}\mathcal{A}_{\xi}Z+\omega\mathcal{V}\nabla^{1}_\xi Z &=\mathcal{A}_{\xi}\varphi Z+\mathcal{H}\nabla^{1}_{Z}\omega W\\
\mathcal{B}\mathcal{A}_{\xi}Z+\varphi\mathcal{V}\nabla^{1}_\xi Z&=\mathcal{V}\nabla^{1}_{\xi}\varphi Z+\mathcal{A}_{\xi}\omega Z,
\end{align*}
\end{enumerate}
for $\xi,\eta\in\Gamma(({\rm kerd}\phi)^\perp)$ and $Z,W\in\Gamma({\rm kerd}\phi)$.
\end{lemma}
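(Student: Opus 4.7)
The plan is to obtain all six identities as the horizontal/vertical resolutions of the single Kähler relation $\nabla^{1}_{X}JY=J\nabla^{1}_{X}Y$, which holds by (\ref{e.q:2.2}). The three items (i)--(iii) correspond to the three argument-type combinations: $(X,Y)$ both horizontal, both vertical, and mixed. In each case I would expand $JY$ with (\ref{jv}) or (\ref{jx}) (whichever applies to $Y$), compute $\nabla^{1}_{X}(\cdot)$ componentwise with the O'Neill decompositions (\ref{nvw})--(\ref{nxy}), then split $J\nabla^{1}_{X}Y$ by applying (\ref{jv})/(\ref{jx}) to each piece of $\nabla^{1}_{X}Y$. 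Equating the horizontal parts of the two sides yields one identity, and equating the vertical parts yields the other.

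Concretely, for (i) I take $\xi,\eta\in\Gamma((\ker d\phi)^\perp)$, write $J\eta=\mathcal{B}\eta+\mathcal{C}\eta$ with $\mathcal{B}\eta$ vertical (in $\mathcal{D}^\perp$) and $\mathcal{C}\eta$ horizontal (in $\mu$), and use (\ref{nxv}) on $\nabla^{1}_{\xi}\mathcal{B}\eta$, (\ref{nxy}) on $\nabla^{1}_{\xi}\mathcal{C}\eta$. On the other side, $\nabla^{1}_{\xi}\eta$ is decomposed by (\ref{nxy}) and then hit with $J$, using (\ref{jv}) on the vertical part $\mathcal{A}_\xi\eta$ and (\ref{jx}) on the horizontal part $\mathcal{H}\nabla^{1}_\xi\eta$. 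Matching horizontal and vertical components produces the two stated formulas. For (ii), with $Z,W\in\Gamma(\ker d\phi)$, I apply (\ref{nvw}) to $\nabla^{1}_{Z}\varphi W$ and (\ref{nvx}) to $\nabla^{1}_{Z}\omega W$; the basic-vector-field convention declared just before the lemma lets me replace $\mathcal{H}\nabla^{1}_{Z}\omega W$ by $\mathcal{A}_{\omega W}Z$, which is exactly the term appearing on the right-hand side. For (iii), with $\xi$ horizontal and $Z$ vertical, $\varphi Z$ is vertical and $\omega Z$ is horizontal, so $\nabla^{1}_{\xi}\varphi Z$ is decomposed via (\ref{nxv}) and $\nabla^{1}_{\xi}\omega Z$ via (\ref{nxy}); on the other side $\nabla^{1}_{\xi}Z$ is decomposed via (\ref{nxv}), and $J$ is applied termwise.

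I do not expect any deep obstacle; the proof is computational. The one point demanding care is the bookkeeping that distributes each summand into $\mathcal{D}$, $\mathcal{D}^\perp$, $\omega\mathcal{D}^\perp$, or $\mu$: one repeatedly invokes $\varphi(\ker d\phi)\subseteq\ker d\phi$, $\omega(\ker d\phi)\subseteq\omega\mathcal{D}^\perp$ (since $\omega\mathcal{D}=0$), $\mathcal{B}((\ker d\phi)^\perp)=\mathcal{D}^\perp$, and $\mathcal{C}((\ker d\phi)^\perp)\subseteq\mu$, all of which are recorded in the preceding proposition. With these incidence rules fixed in advance, each of the six identities drops out by reading off the appropriate component of $\nabla^{1}_{X}JY-J\nabla^{1}_{X}Y=0$.
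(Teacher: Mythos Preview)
Your approach is correct and is exactly the routine computation the paper has in mind: the paper does not give a proof at all, merely stating ``Next, we easily have the following lemma,'' and your plan---expand $\nabla^{1}_{X}JY=J\nabla^{1}_{X}Y$ via (\ref{jv}) or (\ref{jx}) on $Y$, then resolve each side with the O'Neill decompositions (\ref{nvw})--(\ref{nxy}) and compare horizontal and vertical parts---is the standard derivation of such identities. Your use of the basic-field convention to rewrite $\mathcal{H}\nabla^{1}_{Z}\omega W$ as $\mathcal{A}_{\omega W}Z$ in item~(ii) is also the intended step; note in passing that the printed statement contains a few evident typographical slips (e.g.\ $U,V$ for $Z,W$ in (ii), $\omega W$ for $\omega Z$ in (iii), and $\mathcal{B}\mathcal{V}$ where $\mathcal{B}\mathcal{H}$ is meant in the second line of (i)), but your computation will produce the correct versions.
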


\subsection{\textbf{The geometry of $\phi:(M,g,J)\rightarrow (B,h)$}}

\begin{lemma}
\label{lem1} Let $\phi$ be a conformal generic submersion from a K\"{a}hler
manifold $(M,g,J)$ onto a Riemannian manifold $(B,h)$. Then the
distribution $\mathcal{D}$ is integrable if and only if the following is satisfied
\begin{align}
{\lambda^{-2}}\{h((\nabla {\rm d}\phi)(U,JV)-(\nabla{\rm d}\phi)(V,JU),{\rm d}\phi(\omega Z)\}=g(\varphi(\hat{\nabla}_{V}JU-\hat{\nabla}_{U}JV),Z)  \label{dintg}
\end{align}
for $U,V\in\Gamma(\mathcal{D})$ and $Z\in\Gamma(\mathcal{D}^\perp).$
\end{lemma}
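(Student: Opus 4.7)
The plan is to reduce integrability of $\mathcal{D}$ to the vanishing of $g([U,V],Z)$ for all $U,V\in\Gamma(\mathcal{D})$ and $Z\in\Gamma(\mathcal{D}^\perp)$. This suffices because $\ker d\phi$ is integrable, so $[U,V]$ is automatically vertical, and $\mathcal{D}^\perp$ spans the orthogonal complement of $\mathcal{D}$ inside the vertical distribution. Starting from $g([U,V],Z)=g(\nabla^{1}_{U}V-\nabla^{1}_{V}U,Z)$, I would use the K\"ahler identity $\nabla^{1}J=0$ to rewrite $\nabla^{1}_{U}V=-J\nabla^{1}_{U}JV$, and then use the skew-adjointness of $J$ with respect to $g$ to move $J$ over, obtaining $g(\nabla^{1}_{U}V,Z)=g(\nabla^{1}_{U}JV,JZ)$.

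Next I would split $JZ=\varphi Z+\omega Z$ using (\ref{jx}), noting that $\varphi Z\in\mathcal{D}^\perp\subset\ker d\phi$ by the proposition and $\omega Z\in(\ker d\phi)^\perp$. Simultaneously, since $U, JV\in\Gamma(\ker d\phi)$ (as $V\in\mathcal{D}$ forces $\omega V=0$ and hence $JV=\varphi V\in\mathcal{D}$), I can apply (\ref{nvw}) to write $\nabla^{1}_{U}JV=\hat{\nabla}_{U}JV+\mathcal{T}_{U}JV$ with $\hat{\nabla}_{U}JV$ vertical and $\mathcal{T}_{U}JV$ horizontal. Orthogonality then splits the expression into
\[
g(\nabla^{1}_{U}JV,JZ)=g(\hat{\nabla}_{U}JV,\varphi Z)+g(\mathcal{T}_{U}JV,\omega Z),
\]
and similarly for the $U\leftrightarrow V$ term.

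For the first summand I would observe that $\varphi$ is skew-adjoint on $\ker d\phi$: for $X,Y\in\Gamma(\ker d\phi)$, $g(\varphi X,Y)=g(JX,Y)=-g(X,JY)=-g(X,\varphi Y)$, the horizontal pieces of $JX,JY$ contributing nothing. This moves $\varphi$ off $\hat{\nabla}_{U}JV$ and onto $Z$. For the second summand, both $\mathcal{T}_{U}JV$ and $\omega Z$ are horizontal, so horizontal conformality gives $g(\mathcal{T}_{U}JV,\omega Z)=\lambda^{-2}h(d\phi(\mathcal{T}_{U}JV),d\phi(\omega Z))$, and Lemma \ref{lem1}(ii) (the statement from \cite{BW}) converts $d\phi(\mathcal{T}_{U}JV)=-(\nabla d\phi)(U,JV)$. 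Antisymmetrizing in $U,V$ and collecting terms yields
\[
g([U,V],Z)=g(\varphi(\hat{\nabla}_{V}JU-\hat{\nabla}_{U}JV),Z)-\lambda^{-2}h\bigl((\nabla d\phi)(U,JV)-(\nabla d\phi)(V,JU),d\phi(\omega Z)\bigr),
\]
from which the equivalence with (\ref{dintg}) is immediate.

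The substantive ideas are just three: the K\"ahler identity, the $\mathcal{T}/\hat{\nabla}$ decomposition, and the conformal bridge from $g$ to $h$ via Lemma \ref{lem1}(ii). The main obstacle I foresee is strictly bookkeeping of signs, in particular coordinating the minus sign from $\nabla^{1}_{U}V=-J\nabla^{1}_{U}JV$, the skew-adjointness of $J$ (and hence $\varphi$), and the minus sign in $d\phi(\mathcal{T}_{V}W)=-(\nabla d\phi)(V,W)$ so that the antisymmetrization produces exactly the combination $\hat{\nabla}_{V}JU-\hat{\nabla}_{U}JV$ on the right and $(\nabla d\phi)(U,JV)-(\nabla d\phi)(V,JU)$ on the left.
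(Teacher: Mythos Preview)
Your proposal is correct and follows essentially the same route as the paper: reduce to $g([U,V],Z)=0$ via integrability of $\ker d\phi$, apply the K\"ahler identity to pass to $g(\nabla^{1}_{U}JV,JZ)$, split via $JZ=\varphi Z+\omega Z$ and $\nabla^{1}_{U}JV=\hat{\nabla}_{U}JV+\mathcal{T}_{U}JV$, then convert the horizontal piece using conformality and the second fundamental form. The only cosmetic difference is that the paper invokes the defining formula $(\nabla d\phi)(U,JV)=\nabla^{\phi}_{U}d\phi(JV)-d\phi(\nabla^{1}_{U}JV)$ and then drops $\nabla^{\phi}_{U}d\phi(JV)$ since $JV$ is vertical, whereas you cite Lemma~\ref{lem1}(ii) directly; these are the same step.
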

\begin{proof}
The distribution $\mathcal{D}$ is integrable if and only if 
\begin{equation*}
g([U,V],Z)=0,\ \ \textrm{and} \ \ g([U,V],\xi)=0
\end{equation*}
 for any $U,V\in\Gamma(\mathcal{D}),\; Z\in\Gamma(\mathcal{D}^\perp)$
and $\xi\in\Gamma(({\rm ker d}\phi)^\perp)$. Since ${\rm ker d}\phi$ is integrable $g([U,V],\xi)=0$.
Therefore, $\mathcal{D}$ is integrable if and only if $g([U,V],Z)=0.$
By Eq. (\ref{J})(i), Eq. (\ref{e.q:2.2}), Eq. (\ref{nvw}) and Eq. (\ref{jv}) we have
\begin{align*}
g([U,V],Z)=g(\mathcal{H}\nabla^{^{M}}_{U}JV,\omega Z)+g(\hat{%
\nabla}_{U}JV,\varphi Z)\!\!-\!\!g(\mathcal{H}\nabla^{^{M}}_{V}JU,\omega
Z)\!\!-\!\!g(\hat{\nabla}_{V}JU,\varphi Z).
\end{align*}
By using the property of $\phi$, Eq. (\ref{jv}) and Lemma \ref{lem1} yields
\begin{align*}
g([U,V],Z)&={\lambda^{-2}}h\big(-(\nabla{\rm d}\phi)(U,JV)+\nabla^{\phi}_{U}{\rm d}\phi(JV),{\rm d}\phi(\omega Z)\big)-{\lambda^{-2}}h\big(-(\nabla%
{\rm d}\phi)(V,JU) \\
& +\nabla^{\phi}_{V}{\rm d}\phi(JU),{\rm d}\phi(\omega Z)\big)+g(\varphi(\hat{\nabla}_{V}JU-\hat{\nabla}_{U}JV),Z) \\
&=\lambda^{-2}\{h((\nabla{\rm d}\phi)(V,JU)\!\!-\!\!(\nabla{\rm d}\phi)(U,JV),{\rm d}\phi(\omega Z))\}\!\!+\!\!g(\varphi(\hat{\nabla}_{V}JU\!\!-\!\!\hat{\nabla}_{U}JV),Z)
\end{align*}
which gives Eq. (\ref{dintg}).\newline

In a similar way, we get:
\end{proof}

\begin{lemma}
\label{lem2} Let $\phi$ be a conformal generic submersion from a K\"{a}hler
manifold $(M,g,J)$ onto a Riemannian manifold $(B,h)$. Then the
distribution $\mathcal{D}^\perp$ is integrable if and only if
\begin{align}
\hat{\nabla}_{V}\varphi U-\hat{\nabla}_{U}\varphi V+\mathcal{T}_{V}\omega U-\mathcal{T}_{U}\omega
V\in\Gamma(\mathcal{D}^\perp)  \label{d1intg}
\end{align}
for $U,V\in\Gamma(\mathcal{D}^\perp).$
\end{lemma}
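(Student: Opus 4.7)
The plan is to mirror the strategy of Lemma \ref{lem1}. Since ${\rm kerd}\phi$ is integrable, for $U,V\in\Gamma(\mathcal{D}^\perp)\subset\Gamma({\rm kerd}\phi)$ the bracket $[U,V]$ automatically lies in $\Gamma({\rm kerd}\phi)=\Gamma(\mathcal{D}\oplus\mathcal{D}^\perp)$. So $\mathcal{D}^\perp$ is integrable if and only if the $\mathcal{D}$-component of $[U,V]$ vanishes for every such $U,V$.

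My first step is to trade this condition for a cleaner one involving $\varphi[U,V]=\mathcal{V}(J[U,V])$, taking advantage of the algebraic identities in the preceding proposition. Write $[U,V]=X+Y$ with $X\in\Gamma(\mathcal{D})$ and $Y\in\Gamma(\mathcal{D}^\perp)$. Since $J\mathcal{D}=\mathcal{D}$ and $\omega|_{\mathcal{D}}=0$, we have $\varphi X=JX\in\Gamma(\mathcal{D})$, while $\varphi Y\in\Gamma(\mathcal{D}^\perp)$ by $\varphi\mathcal{D}^\perp\subset\mathcal{D}^\perp$. Hence $\varphi[U,V]\in\Gamma(\mathcal{D}^\perp)$ forces $JX=0$, i.e.\ $X=0$; conversely $X=0$ yields $\varphi[U,V]=\varphi Y\in\Gamma(\mathcal{D}^\perp)$. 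Therefore the integrability of $\mathcal{D}^\perp$ is equivalent to $\varphi[U,V]\in\Gamma(\mathcal{D}^\perp)$ for all $U,V\in\Gamma(\mathcal{D}^\perp)$.

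The final step is to compute $\varphi[U,V]$. By the K\"ahler condition (\ref{e.q:2.2}), $J[U,V]=\nabla^{1}_{U}JV-\nabla^{1}_{V}JU$. Substituting $JU=\varphi U+\omega U$ and $JV=\varphi V+\omega V$, applying (\ref{nvw}) to the derivatives of the vertical parts $\varphi U,\varphi V$ along the vertical fields $U,V$ and (\ref{nvx}) to the derivatives of the horizontal parts $\omega U,\omega V$, and extracting the vertical component yields
$$\varphi[U,V]=\bigl(\hat{\nabla}_{U}\varphi V-\hat{\nabla}_{V}\varphi U\bigr)+\bigl(\mathcal{T}_{U}\omega V-\mathcal{T}_{V}\omega U\bigr).$$
This is the negative of the expression appearing in (\ref{d1intg}); since $\mathcal{D}^\perp$ is a linear subspace the sign is immaterial, and the stated equivalence follows.

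The only subtle point, and the main obstacle worth naming, is the reduction in the second paragraph: one must use both that $\varphi|_{\mathcal{D}}=J|_{\mathcal{D}}$ is an isomorphism of $\mathcal{D}$ and that $\varphi(\mathcal{D}^\perp)\subset\mathcal{D}^\perp$ to transfer the $\mathcal{D}^\perp$-membership freely between $[U,V]$ and $\varphi[U,V]$. Without the inclusion $\varphi\mathcal{D}^\perp\subset\mathcal{D}^\perp$ from the preceding proposition, the $\varphi Y$ term could contaminate $\mathcal{D}$ and block the reverse direction. Once this reduction is in hand, the rest is a routine K\"ahler plus O'Neill-tensor calculation.
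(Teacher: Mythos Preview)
Your proof is correct. The paper does not actually write out a proof of this lemma; it simply says ``In a similar way, we get:'' after the proof of Lemma~\ref{lem1}. Your argument is a clean realization of that omitted proof: the key step---reducing $[U,V]\in\Gamma(\mathcal{D}^\perp)$ to $\varphi[U,V]\in\Gamma(\mathcal{D}^\perp)$ via $\varphi|_{\mathcal{D}}=J|_{\mathcal{D}}$ and $\varphi\mathcal{D}^\perp\subset\mathcal{D}^\perp$---is exactly what makes this case simpler than Lemma~\ref{lem1} (no horizontal test direction, hence no $(\nabla{\rm d}\phi)$ terms appear), and your O'Neill computation of $\varphi[U,V]$ is accurate, including the harmless sign discrepancy with~(\ref{d1intg}).
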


We now investigate the geometry of leaves of $\mathcal{D}$
and $\mathcal{D}^\perp$.

\begin{lemma}
\label{lem3} Let $\phi$ be a conformal generic submersion from a K\"{a}hler
manifold $(M,g,J)$ to a Riemannian manifold $(B,h)$. Then $\mathcal{D}$
defines a totally geodesic foliation on $M$ if and only if

\begin{enumerate}
\item[(a)] $\lambda^{-2}h((\nabla {\rm d}\phi)(X_{1},JY_{1}),{\rm d}\phi(\omega
X_2))=g(\hat{\nabla}_{X_{1}}JY_{1},\varphi X_2)$
\item[(b)]$\begin{aligned}[t]
\lambda^{-2}h((\nabla {\rm d}\phi)(X_{1},JY_{1}),{\rm d}\phi(\mathcal{C}%
\xi))=g(\hat{\nabla}_{X_{1}}\varphi\mathcal{B}X+\mathcal{T}_{X_1}\omega\mathcal{B}X,Y_1)
\end{aligned}$
\end{enumerate}
for $X_{1},Y_{1}\in\Gamma(\mathcal{D}),X_{2}\in\Gamma(\mathcal{D}^\perp)$ and $%
\xi\in\Gamma(({\rm kerd}\phi)^\perp)$.
\end{lemma}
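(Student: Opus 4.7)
The plan is to unwind the definition: $\mathcal{D}$ defines a totally geodesic foliation on $M$ iff $\nabla^{1}_{X_{1}}Y_{1}\in\Gamma(\mathcal{D})$ for all $X_{1},Y_{1}\in\Gamma(\mathcal{D})$. Because of the orthogonal decompositions $TM=\mathcal{D}\oplus\mathcal{D}^{\perp}\oplus\omega\mathcal{D}^{\perp}\oplus\mu$ and $({\rm kerd}\phi)^{\perp}=\omega\mathcal{D}^{\perp}\oplus\mu$, this is equivalent to the two conditions $g(\nabla^{1}_{X_{1}}Y_{1},X_{2})=0$ for every $X_{2}\in\Gamma(\mathcal{D}^{\perp})$ and $g(\nabla^{1}_{X_{1}}Y_{1},\xi)=0$ for every $\xi\in\Gamma(({\rm kerd}\phi)^{\perp})$. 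I will show that these two scalar identities are exactly (a) and (b). Throughout I use the K\"ahler identity $\nabla^{1}J=0$ together with (\ref{jv}), (\ref{jx}) and the decomposition formulas (\ref{nvw})--(\ref{nxy}).

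For (a), I first rewrite $g(\nabla^{1}_{X_{1}}Y_{1},X_{2})=-g(\nabla^{1}_{X_{1}}JY_{1},JX_{2})$ using $J^{2}=-I$ and the K\"ahler condition. Splitting $JX_{2}=\varphi X_{2}+\omega X_{2}$ via (\ref{jv}), the vertical term $\varphi X_{2}$ contributes $-g(\hat{\nabla}_{X_{1}}JY_{1},\varphi X_{2})$ because $\mathcal{T}_{X_{1}}JY_{1}$ is horizontal by (\ref{T}). For the horizontal term $\omega X_{2}$, I use $g(\nabla^{1}_{X_{1}}JY_{1},\omega X_{2})=g(\mathcal{T}_{X_{1}}JY_{1},\omega X_{2})$, then convert this to the base by horizontal conformality and apply Lemma \ref{lem1}(ii), which yields $\mathcal{T}_{X_{1}}JY_{1}=-\tfrac{1}{\lambda^{2}}(\nabla d\phi)(X_{1},JY_{1})$ (as a horizontal vector modulo $d\phi$). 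This produces exactly the term $\lambda^{-2}h((\nabla d\phi)(X_{1},JY_{1}),d\phi(\omega X_{2}))$, and setting the sum equal to zero gives (a).

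For (b), the strategy is similar but requires one more application of $J$ to reach vertical objects. I start from $g(\nabla^{1}_{X_{1}}Y_{1},\xi)=-g(\nabla^{1}_{X_{1}}JY_{1},J\xi)$ and split $J\xi=\mathcal{B}\xi+\mathcal{C}\xi$ via (\ref{jx}). The component $\mathcal{C}\xi\in\Gamma(\mu)$ is horizontal, so as in step (a) it contributes $\lambda^{-2}h((\nabla d\phi)(X_{1},JY_{1}),d\phi(\mathcal{C}\xi))$. The component $\mathcal{B}\xi\in\Gamma(\mathcal{D}^{\perp})$ is vertical; since $JY_{1}\in\Gamma(\mathcal{D})$ is orthogonal to $\mathcal{B}\xi\in\Gamma(\mathcal{D}^{\perp})$, integration by parts in the metric gives $g(\nabla^{1}_{X_{1}}JY_{1},\mathcal{B}\xi)=-g(JY_{1},\nabla^{1}_{X_{1}}\mathcal{B}\xi)=g(Y_{1},\nabla^{1}_{X_{1}}J\mathcal{B}\xi)$ by the K\"ahler identity. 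Decomposing $J\mathcal{B}\xi=\varphi\mathcal{B}\xi+\omega\mathcal{B}\xi$ and using (\ref{nvw}) on the vertical piece and (\ref{nvx}) on the horizontal piece, I obtain $g(Y_{1},\hat{\nabla}_{X_{1}}\varphi\mathcal{B}\xi+\mathcal{T}_{X_{1}}\omega\mathcal{B}\xi)$, after discarding terms that pair a vertical and a horizontal vector. Combining everything and requiring the sum to vanish yields (b).

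The only delicate point is bookkeeping of signs and of which component of each tensor lives in which subbundle; this is where the purely real nature of $\mathcal{D}^{\perp}$ (so that $\varphi\mathcal{D}^{\perp}\subseteq\mathcal{D}^{\perp}$ and $\omega\mathcal{D}^{\perp}\subseteq({\rm kerd}\phi)^{\perp}$) and the relation $\mathcal{B}(({\rm kerd}\phi)^{\perp})=\mathcal{D}^{\perp}$, $\mathcal{C}(({\rm kerd}\phi)^{\perp})\subseteq\mu$ from the preceding Proposition must be invoked to guarantee that the extracted projections agree with those appearing in (a) and (b). No genuinely hard step arises beyond careful use of Lemma \ref{lem1}(ii) to convert $(\nabla d\phi)$ into a multiple of $\mathcal{T}$ through the conformality factor $\lambda^{2}$.
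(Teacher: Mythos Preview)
Your approach is essentially identical to the paper's: both reduce the question to the vanishing of $g(\nabla^{1}_{X_{1}}Y_{1},X_{2})$ and $g(\nabla^{1}_{X_{1}}Y_{1},\xi)$, apply the K\"ahler identity to pass to $JY_{1}$ and $JX_{2}$ (resp.\ $J\xi=\mathcal{B}\xi+\mathcal{C}\xi$), split via (\ref{jv})--(\ref{jx}), and convert the horizontal $\mathcal{T}$-term into $(\nabla d\phi)$ through Lemma~\ref{lem1}(ii) and the conformality factor $\lambda^{2}$. One minor slip to fix: K\"ahler compatibility gives $g(\nabla^{1}_{X_{1}}Y_{1},X_{2})=+g(\nabla^{1}_{X_{1}}JY_{1},JX_{2})$, not $-$, but since the expression is set equal to zero this does not affect the derived conditions (a) and (b).
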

\begin{proof}
The distribution $\mathcal{D}$ defines a totally geodesic foliation on $M$ if and
only if 
\begin{equation*}
g(\nabla^{^{M}}_{X_{1}}Y_{1},X_{2})=0\ \ \textrm{and} \ \ g(\nabla^{^{M}}_{X_{1}}Y_{1},\xi)=0
\end{equation*}
for any $X_{1},Y_{1}\in\Gamma(\mathcal{D}),\,X_{2}\in\Gamma(\mathcal{D}^\perp)$ and $\xi\in\Gamma(({\rm kerd}\phi)^\perp)$.
 By virtue of Eq. (\ref{J})(i) and Eq. (\ref{J})(ii), we get
\begin{equation*}
g(\nabla^{^{M}}_{X_{1}}Y_{1},X_{2})=g(\hat{\nabla}_{X_1}JY_1,\varphi
X_2)+g(\mathcal{H}\nabla^{^{M}}_{X_{1}}JY_{1},\omega X_{2}).
\end{equation*}
Since $\phi$ is a conformal generic submersion, by Eq. (\ref{nfixy}) yields
\begin{align}  \label{nx1y1x2}
g(\nabla^{1}_{X_{1}}Y_{1},X_{2})&=g(\hat{\nabla}_{X_1}JY_1,\varphi
X_2)-\lambda^{-2}h((\nabla {\rm d}\phi)(X_{1},JY_{1}),{\rm d}\phi(\omega
X_{2})).
\end{align}
On the other hand, by Eq. (\ref{J})(i), Eq. (\ref{J})(ii), Eq. (\ref{nvw}) and Eq. (\ref{jx}) yields
\begin{equation*}
g(\nabla^{1}_{X_{1}}Y_{1},\xi)=g(Y_1,\nabla^{1}_{X_1}J%
\mathcal{B}\xi)+g(\mathcal{H}\nabla^{1}_{X_{1}}JY_{1},\mathcal{C}\xi).
\end{equation*}
By Eq. (\ref{nvx}), Eq. (\ref{nfixy}) and Eq. (\ref{jv}) we get
\begin{align}  \label{x1y1x}
g(\nabla^{1}_{X_{1}}Y_{1},\xi)&=g(Y_1,\hat{\nabla}_{X_{1}}\varphi\mathcal{B}\xi)+g(Y_1,\mathcal{T}_{X_1}\omega\mathcal{B}\xi)\\
&-\lambda^{-2}h((\nabla {\rm d}\phi)(X_{1},JY_{1}),{\rm d}\phi(\mathcal{C}\xi)).\notag
\end{align}
Hence proof follows from Eq. (\ref{nx1y1x2}) and Eq. (\ref{x1y1x}).
\end{proof}

In a similar way, we have the following result.

\begin{lemma}
\label{lem4} Let $\phi$ be a conformal generic submersion from a K\"{a}hler
manifold $(M,g,J)$ to a Riemannian manifold $(B,h)$. Then $\mathcal{D}^\perp$
defines a totally geodesic foliation on $M$ if and only if
\begin{enumerate}
\item[(a)] $\lambda^{-2}h((\nabla {\rm d}\phi)(X_{2},JX_{1}),{\rm d}\phi(\omega Y_2))=g(\hat{\nabla}_{X_{2}}JX_{1},\varphi Y_2)$,
\item[(b)]$\begin{aligned}[t]
\lambda^{-2}h\big((\nabla {\rm d}\phi)(X_2,Y_2),{\rm d}\phi(JC\xi)\big)=g(\mathcal{T}_{X_2}\mathcal{B}\xi,\omega Y_2)-g(\hat{\nabla}_{X_2}\varphi Y_2,\mathcal{B}\xi)
\end{aligned}$
\end{enumerate}
for $X_{1},Y_{1}\in\Gamma(\mathcal{D}),X_{2},Y_{2}\in\Gamma(\mathcal{D}^\perp)$ and $%
\xi\in\Gamma(({\rm kerd}\phi)^\perp)$.
\end{lemma}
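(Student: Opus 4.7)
The plan is to follow the template of Lemma \ref{lem3}: $\mathcal{D}^{\perp}$ defines a totally geodesic foliation on $M$ if and only if $g(\nabla^{1}_{X_{2}}Y_{2},X_{1})=0$ for every $X_{1}\in\Gamma(\mathcal{D})$ and $g(\nabla^{1}_{X_{2}}Y_{2},\xi)=0$ for every $\xi\in\Gamma(({\rm kerd}\phi)^{\perp})$. The first family of equations will produce (a) and the second will produce (b). At each step I will move $J$ inside the inner product using (\ref{J})(ii) and the K\"ahler condition (\ref{e.q:2.2}), decompose vectors via (\ref{jv}) and (\ref{jx}), apply the O'Neill decompositions (\ref{nvw}) and (\ref{nvx}), and convert vertical-to-horizontal O'Neill pieces into second fundamental forms via $(\nabla{\rm d}\phi)(V,W)=-{\rm d}\phi(\mathcal{T}_{V}W)$ together with conformality.

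For (a), I would start with $g(\nabla^{1}_{X_{2}}Y_{2},X_{1})=g(\nabla^{1}_{X_{2}}JY_{2},JX_{1})$ and split $JY_{2}=\varphi Y_{2}+\omega Y_{2}$. Since $\varphi Y_{2}\in\Gamma(\mathcal{D}^{\perp})$ and $JX_{1}\in\Gamma(\mathcal{D})$ are pointwise orthogonal, differentiating the vanishing product $g(\varphi Y_{2},JX_{1})$ and using (\ref{nvw}) reduces the $\varphi Y_{2}$-piece to $-g(\varphi Y_{2},\hat{\nabla}_{X_{2}}JX_{1})$. For the $\omega Y_{2}$-piece, (\ref{nvx}) isolates $\mathcal{T}_{X_{2}}\omega Y_{2}$, and skew-symmetry of $\mathcal{T}$ together with ${\rm d}\phi(\mathcal{T}_{X_{2}}JX_{1})=-(\nabla {\rm d}\phi)(X_{2},JX_{1})$ and conformality converts it into $\lambda^{-2}h((\nabla {\rm d}\phi)(X_{2},JX_{1}),{\rm d}\phi(\omega Y_{2}))$. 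Setting the sum to zero yields (a).

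For (b), the analogous rewrite $g(\nabla^{1}_{X_{2}}Y_{2},\xi)=g(\nabla^{1}_{X_{2}}JY_{2},J\xi)$ expands, using (\ref{jv}) and (\ref{jx}), into four cross-terms. The term pairing $\varphi Y_{2}$ with $\mathcal{B}\xi$ collapses via (\ref{nvw}) to $g(\hat{\nabla}_{X_{2}}\varphi Y_{2},\mathcal{B}\xi)$; the term pairing $\omega Y_{2}$ with $\mathcal{B}\xi$ collapses, via (\ref{nvx}) and skew-symmetry of $\mathcal{T}$, to $-g(\mathcal{T}_{X_{2}}\mathcal{B}\xi,\omega Y_{2})$. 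The two remaining terms pair with $\mathcal{C}\xi$ and aggregate as $g(\mathcal{H}\nabla^{1}_{X_{2}}JY_{2},\mathcal{C}\xi)$ because $\mathcal{T}_{X_{2}}\varphi Y_{2}+\mathcal{H}\nabla^{1}_{X_{2}}\omega Y_{2}=\mathcal{H}\nabla^{1}_{X_{2}}(\varphi Y_{2}+\omega Y_{2})$. A K\"ahler switch together with $g(JA,B)=-g(A,JB)$ turns this into $-g(\mathcal{T}_{X_{2}}Y_{2},J\mathcal{C}\xi)$, and the second fundamental form identity plus conformality then yield $\lambda^{-2}h((\nabla {\rm d}\phi)(X_{2},Y_{2}),{\rm d}\phi(J\mathcal{C}\xi))$. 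Rearranging produces (b).

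The main obstacle will be the $\mathcal{C}\xi$-aggregation step in (b): one must see that two terms with apparently independent O'Neill structure ($\mathcal{T}_{X_{2}}\varphi Y_{2}$ and $\mathcal{H}\nabla^{1}_{X_{2}}\omega Y_{2}$) fuse into a single $(\nabla {\rm d}\phi)(X_{2},Y_{2})$ through the K\"ahler identity applied to $\mathcal{H}\nabla^{1}_{X_{2}}JY_{2}$. Otherwise the proof is routine O'Neill bookkeeping, but one must keep careful track of the decompositions $\varphi Y_{2},\mathcal{B}\xi\in\Gamma(\mathcal{D}^{\perp})$, $\omega Y_{2}\in\Gamma(\omega\mathcal{D}^{\perp})$, and $J\mathcal{C}\xi\in\Gamma(\mu)$ so that each horizontal-vertical projection is the correct one.
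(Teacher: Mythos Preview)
Your argument is correct and is precisely the computation the paper has in mind: the paper gives no explicit proof of this lemma, merely writing ``In a similar way, we have the following result'' after Lemma~\ref{lem3}, and your derivation is the natural adaptation of that proof. The only nuance---which you handle correctly---is that because $JX_{1}\in\Gamma(\mathcal{D})$ has no $\omega$-component while $JY_{2}$ does, part (a) requires the extra integration-by-parts step $g(\nabla^{1}_{X_{2}}\varphi Y_{2},JX_{1})=-g(\varphi Y_{2},\hat{\nabla}_{X_{2}}JX_{1})$, and part (b) requires the additional K\"ahler switch $g(\mathcal{H}\nabla^{1}_{X_{2}}JY_{2},\mathcal{C}\xi)=-g(\mathcal{T}_{X_{2}}Y_{2},J\mathcal{C}\xi)$ to land on $(\nabla{\rm d}\phi)(X_{2},Y_{2})$ rather than $(\nabla{\rm d}\phi)(X_{2},JY_{2})$; both of these you identify and execute properly.
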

From Lemma \ref{lem3} and Lemma \ref{lem4}, we have the following result.

\begin{lemma}\label{lemma} 
Let $\phi:(M,g,J)\longrightarrow (B,h)$ be a
conformal generic submersion from a K\"{a}hler manifold $(M,g,J)$ onto
a Riemannian manifold $(B,h)$. Then the fibers of $\phi$ are locally
product manifold of the form $M_{\mathcal{D}}\times M_{\mathcal{D}^\perp}$ if and only if
\begin{enumerate}
\item[(i)] $\lambda^{-2}h((\nabla {\rm d}\phi)(X_{1},JY_{1}),{\rm d}\phi(\omega
X_2))=g(\hat{\nabla}_{X_{1}}JY_{1},\varphi X_2)$
\item[(ii)]$\begin{aligned}[t]
\lambda^{-2}h((\nabla {\rm d}\phi)(X_{1},JY_{1}),{\rm d}\phi(\mathcal{C}%
\xi))=g(\hat{\nabla}_{X_{1}}\varphi\mathcal{B}\xi+\mathcal{T}_{X_1}\omega\mathcal{B}\xi,Y_1)
\end{aligned}$
\item[(iii)] $\lambda^{-2}h((\nabla {\rm d}\phi)(X_{2},JX_{1}),{\rm d}\phi(\omega Y_2))=g(\hat{\nabla}_{X_{2}}JX_{1},\varphi Y_2)$,
\item[(iv)]$\begin{aligned}[t]
\lambda^{-2}h\big((\nabla {\rm d}\phi)(X_2,Y_2),{\rm d}\phi(JC\xi)\big)=g(\mathcal{T}_{X_2}\mathcal{B}\xi,\omega Y_2)
-g(\hat{\nabla}_{X_2}\varphi Y_2,\mathcal{B}\xi)
\end{aligned}$
\end{enumerate}
for any $X_{1},Y_{1}\in\Gamma(\mathcal{D}),X_{2},Y_2\in\Gamma(\mathcal{D}^\perp)$ and $\xi\in\Gamma(({\rm kerd}\phi)^\perp).$
\end{lemma}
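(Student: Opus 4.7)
The plan is to reduce the product-structure claim to the two totally geodesic foliation results already established in Lemmas \ref{lem3} and \ref{lem4}, and then invoke the standard local decomposition principle.

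First, I recall that since $\phi$ is a submersion, each fiber $\phi^{-1}(q)$ is a smooth submanifold of $M$ whose tangent bundle is exactly $\ker d\phi$. By the decomposition (\ref{dphi})--(\ref{dphi1}), inside each fiber we have two complementary orthogonal distributions $\mathcal{D}$ and $\mathcal{D}^\perp$, with $\mathcal{D}$ being $J$-invariant and $\mathcal{D}^\perp$ purely real. The classical local de~Rham type decomposition theorem states that a Riemannian manifold equipped with two complementary orthogonal distributions splits locally as a Riemannian product of the corresponding integral submanifolds precisely when both distributions are integrable and define totally geodesic foliations. Applied fiberwise, this tells us that each fiber of $\phi$ is locally of the form $M_{\mathcal{D}} \times M_{\mathcal{D}^\perp}$ if and only if both $\mathcal{D}$ and $\mathcal{D}^\perp$ define totally geodesic foliations on $M$.

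Next, I would directly cite Lemma \ref{lem3}, which characterizes when $\mathcal{D}$ defines a totally geodesic foliation: this is precisely conditions (i) and (ii) of the statement. Then I would cite Lemma \ref{lem4}, which characterizes when $\mathcal{D}^\perp$ defines a totally geodesic foliation: this is precisely conditions (iii) and (iv). Combining these two equivalences gives the desired if-and-only-if for the local product structure.

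The main obstacle, if any, is a conceptual one rather than computational: one must check that the totally-geodesic-foliation conditions in Lemmas \ref{lem3} and \ref{lem4}, which are stated for $\mathcal{D}$ and $\mathcal{D}^\perp$ as distributions on $M$, correctly translate into the intrinsic totally geodesic condition inside the fibers needed for the de~Rham-type splitting. This is handled by noting that both $\mathcal{D}$ and $\mathcal{D}^\perp$ are contained in the integrable distribution $\ker d\phi$, so the extrinsic condition $g(\nabla^{M}_{X_1}Y_1,X_2)=0$ (with $X_1,Y_1 \in \Gamma(\mathcal{D})$, $X_2 \in \Gamma(\mathcal{D}^\perp)$) coincides with the intrinsic fiber condition $g(\nabla^{\text{fiber}}_{X_1}Y_1,X_2)=0$, because the second fundamental form $\mathcal{T}$ of the fibers takes values in the horizontal distribution and so does not contribute to the $\mathcal{D}^\perp$-component. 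Once this identification is noted, the lemma follows immediately by assembling the four conditions from Lemmas \ref{lem3} and \ref{lem4}.
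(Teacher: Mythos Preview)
Your approach is exactly the paper's: the paper states this lemma with the single line ``From Lemma \ref{lem3} and Lemma \ref{lem4}, we have the following result'' and gives no further argument, so combining those two characterizations is precisely what is intended. Your added care about the de~Rham decomposition and the identification of the ambient condition $g(\nabla^{M}_{X_1}Y_1,X_2)=0$ with the intrinsic fiber condition is more than the paper supplies, but it is in the same spirit and fills in what the paper leaves implicit.
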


Since the distribution ${\rm kerd}\phi$ is integrable, we only study the
integrability of the distribution $({\rm kerd}\phi)^\perp$ and then we discuss
the geometry of leaves of ${\rm kerd}\phi$ and $({\rm kerd}\phi)^\perp$.

\begin{theorem}
\label{dinteg} Let $\phi$ be a conformal generic submersion from a K\"{a}hler
manifold $(M_1,g_{1},J)$ to a Riemannian manifold $(M_2,g_{2})$. Then the following conditions are equivalent:
\begin{enumerate}
\item[(i)] The distribution $({\rm kerd}\phi)^\perp$ is integrale.
\item[(ii)] $\mathcal{V}(\nabla^{1}_{\xi}\mathcal{B}\eta-\nabla^{1}_{\eta}\mathcal{B}\xi)+\mathcal{A}_\xi{\mathcal{C}\eta}
-\mathcal{A}_\eta{\mathcal{C}\xi}\in\Gamma(\mathcal{D}^\perp).$
\item[(iii)]
$\begin{aligned}[t]
&\lambda^{-2}h\big((\nabla{\rm d}\phi)(\xi,\mathcal{B}\eta)-(\nabla{\rm d}\phi)(\eta,\mathcal{B}\xi)-\nabla^{\phi}_{\xi}{\rm d}\phi(\mathcal{C}\eta)
+\nabla^{\phi}_{\eta}{\rm d}\phi(\mathcal{C}\xi),{\rm d}\phi(\omega W)\big)\\
&=g\big(\eta(\ln\lambda)\mathcal{C}\xi\!-\!\xi(\ln\lambda)\mathcal{C}\eta\!-\!\mathcal{C}\eta(\ln\lambda)\xi\!\!+\!\!\mathcal{C}\xi(\ln\lambda)\eta
\!\!+\!\!2g(\xi,\mathcal{C}\eta)\nabla\ln\lambda,\omega W\big) \\
&+g(-\varphi(\mathcal{V}\nabla^{1}_{\xi}\mathcal{B}\eta-\mathcal{V}\nabla^{1}_{\eta}\mathcal{B}\xi+\mathcal{A}_{\xi}\mathcal{C}\eta-\mathcal{A}_{\eta}\mathcal{C}\xi),W)\notag 
\end{aligned}$
\end{enumerate}
for $\xi,\eta\in\Gamma({\rm kerd}\phi)^\perp),$  $V\in\Gamma(\mathcal{D})$ and $W\in\Gamma(\mathcal{D}^\perp)$.
\end{theorem}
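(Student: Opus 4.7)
The plan is to test the integrability of $({\rm kerd}\phi)^\perp$ by pairing $[\xi,\eta]$ with an arbitrary vertical vector; the splitting ${\rm kerd}\phi=\mathcal{D}\oplus\mathcal{D}^\perp$ decouples the test into two independent conditions, with the pairing against $V\in\Gamma(\mathcal{D})$ producing (ii) and the pairing against $W\in\Gamma(\mathcal{D}^\perp)$ producing (iii). In both cases the key input is the K\"{a}hler identity $g(\nabla^{1}_{\xi}\eta,X)=g(\nabla^{1}_{\xi}J\eta,JX)$, obtained from $J$-compatibility of $g$ together with $\nabla J=0$, combined with the canonical splittings $J\eta=\mathcal{B}\eta+\mathcal{C}\eta$ and $JX=\varphi X+\omega X$ from (\ref{jv}) and (\ref{jx}).

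For (i)$\Leftrightarrow$(ii), let $V\in\Gamma(\mathcal{D})$, so that $JV=\varphi V\in\Gamma(\mathcal{D})$ remains vertical. Decomposing $\nabla^{1}_{\xi}J\eta=\nabla^{1}_{\xi}\mathcal{B}\eta+\nabla^{1}_{\xi}\mathcal{C}\eta$, applying (\ref{nxv}) to the first summand (whose argument is vertical) and (\ref{nxy}) to the second (whose argument is horizontal), and keeping only the vertical parts when pairing against $JV$, one obtains
\[
g(\nabla^{1}_{\xi}\eta,V)=g\bigl(\mathcal{V}\nabla^{1}_{\xi}\mathcal{B}\eta+\mathcal{A}_{\xi}\mathcal{C}\eta,JV\bigr).
\]
Antisymmetrizing in $\xi,\eta$ and using that $JV$ sweeps out $J(\mathcal{D})=\mathcal{D}$ as $V$ does, the condition $g([\xi,\eta],V)=0$ for every such $V$ is equivalent to $\mathcal{V}\nabla^{1}_{\xi}\mathcal{B}\eta-\mathcal{V}\nabla^{1}_{\eta}\mathcal{B}\xi+\mathcal{A}_{\xi}\mathcal{C}\eta-\mathcal{A}_{\eta}\mathcal{C}\xi$ being $\mathcal{D}$-orthogonal inside ${\rm kerd}\phi$, i.e., lying in $\mathcal{D}^\perp$, which is (ii).

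For (i)$\Leftrightarrow$(iii), let $W\in\Gamma(\mathcal{D}^\perp)$ and split $JW=\varphi W+\omega W$, where $\varphi W\in\Gamma(\mathcal{D}^\perp)$ is vertical and $\omega W\in\Gamma(({\rm kerd}\phi)^\perp)$ is horizontal. The pairing $g(\nabla^{1}_{\xi}J\eta,\varphi W)$ reduces as in the previous paragraph to $g(\mathcal{V}\nabla^{1}_{\xi}\mathcal{B}\eta+\mathcal{A}_{\xi}\mathcal{C}\eta,\varphi W)$, and the skew-symmetry $g(Y,\varphi W)=-g(\varphi Y,W)$ on the vertical distribution (immediate from $g(JZ_1,Z_2)=-g(Z_1,JZ_2)$) converts it, after antisymmetrization, into the $-\varphi(\,\cdots\,)$ term of (iii). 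The pairing $g(\nabla^{1}_{\xi}J\eta,\omega W)$ is horizontal-horizontal and is translated via the conformal rule $g(\mathcal{H}X,\mathcal{H}Y)=\lambda^{-2}h({\rm d}\phi X,{\rm d}\phi Y)$; since ${\rm d}\phi(\mathcal{B}\eta)=0$ we get ${\rm d}\phi(\nabla^{1}_{\xi}\mathcal{B}\eta)=-(\nabla{\rm d}\phi)(\xi,\mathcal{B}\eta)$, while ${\rm d}\phi(\nabla^{1}_{\xi}\mathcal{C}\eta)=\nabla^{\phi}_{\xi}{\rm d}\phi(\mathcal{C}\eta)-(\nabla{\rm d}\phi)(\xi,\mathcal{C}\eta)$, with the latter second fundamental form expanded by Lemma \ref{lem1}(i) into its three $\ln\lambda$ terms. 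Antisymmetrizing, the K\"{a}hler relation $g(\xi,\mathcal{C}\eta)=-g(\eta,\mathcal{C}\xi)$ consolidates the two $d\phi(\nabla\ln\lambda)$ contributions into a single term with coefficient $2g(\xi,\mathcal{C}\eta)$. Requiring $g([\xi,\eta],W)=0$ and transposing yields (iii).

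The main bookkeeping obstacle lies in the last step: one must carefully track signs through the skew-symmetry of $\varphi$ and through the antisymmetrization of the three terms produced by Lemma \ref{lem1}(i), verifying in particular that the four surviving coefficients $\xi(\ln\lambda),\eta(\ln\lambda),\mathcal{C}\xi(\ln\lambda),\mathcal{C}\eta(\ln\lambda)$ line up with the signs displayed in (iii) and that the $\nabla\ln\lambda$ term carries coefficient $+2g(\xi,\mathcal{C}\eta)$. Every other manipulation is a direct application of the O'Neill identities (\ref{nxv})--(\ref{nxy}), the algebraic relations among $\varphi,\omega,\mathcal{B},\mathcal{C}$ recorded in the proposition preceding this subsection, and the conformal scaling of the metric.
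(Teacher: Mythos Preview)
Your proposal is correct and follows essentially the same route as the paper: both arguments test $g([\xi,\eta],\cdot)$ against $\mathcal{D}$ and $\mathcal{D}^\perp$ separately, invoke the K\"{a}hler identity to replace $\eta$ by $J\eta=\mathcal{B}\eta+\mathcal{C}\eta$, pick out vertical components via (\ref{nxv})--(\ref{nxy}) for the $\mathcal{D}$-pairing, and for the $\mathcal{D}^\perp$-pairing translate the horizontal $\omega W$-component through the conformal relation together with Lemma~\ref{lem1}(i). The only cosmetic difference is that the paper writes out the intermediate $\lambda^{-2}h(\cdots)$ expansions line by line before collecting terms, whereas you summarize that bookkeeping in prose.
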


\begin{proof}
By virtue of (\ref{J})(i) and (\ref{J})(ii), we get
\begin{equation*}
g([\xi,\eta],JV)=g(-J[\xi,\eta],V)=-g(J\nabla^{1}_{\xi}J\eta,JV_1)+g(J\nabla^{1}_{\eta}J\xi,JV)
\end{equation*}
for $\xi,\eta\in\Gamma(({\rm kerd}\phi)^\perp)$ and $V_1\in\Gamma(\mathcal{D}).$ Then by using (\ref{jx}), (\ref{jv}), (\ref{nxv}) and (\ref{nxy}) yields
\begin{align*}
g([\xi,\eta],JV)&=-g(\phi(\mathcal{V}(\nabla^{1}_{\xi}\mathcal{B}\eta-\nabla^{1}_{\eta}\mathcal{B}\xi)+\mathcal{A}_\xi{\mathcal{C}\eta}-\mathcal{A}_\eta{\mathcal{C}\xi}),JV)
\end{align*}
So that
\begin{align}\label{yint1}
g([\xi,\eta],JV)=0\Longleftrightarrow \mathcal{V}(\nabla^{1}_{\xi}\mathcal{B}\eta-\nabla^{1}_{\eta}\mathcal{B}\xi)
+\mathcal{A}_\xi{\mathcal{C}\eta}-\mathcal{A}_\eta{\mathcal{C}\xi}\in\Gamma(\mathcal{D}^\perp).
\end{align}
Also using (\ref{nxv}), (\ref{nxy}) and (\ref{jx}) we get
\begin{align*}
g([\xi,\eta],W)&=g(\mathcal{V}\nabla^{1}_{\xi}\mathcal{B}\eta-\mathcal{V}%
\nabla^{1}_{\eta}\mathcal{B}\xi+\mathcal{A}_{\xi}\mathcal{C}\eta-\mathcal{A}_{\eta}\mathcal{C}\xi,\varphi W)
\!\!+\!\!g(\mathcal{H}\nabla^{1}_{\xi}\mathcal{B}\eta,\omega W) \\
&-g(\mathcal{H}\nabla^{1}_{\eta}\mathcal{B}\xi,\omega W)+g_{1}(\mathcal{H%
}\nabla^{1}_{\xi}\mathcal{C}\eta,\omega W)-g(\mathcal{H}\nabla^{1}_{\eta}\mathcal{C}\xi,\omega W).
\end{align*}
Taking into account (\ref{nfixy}) and Lemma \ref{lem1}, we get
\begin{align*}
g([\xi,&\eta],W)=g(\mathcal{V}\nabla^{1}_{\xi}\mathcal{B}\eta-\mathcal{V}\nabla^{1}_{\eta}\mathcal{B}\xi+\mathcal{A}_{\xi}\mathcal{C}\eta-\mathcal{A}_{\eta}\mathcal{C}\xi,\varphi W)\\
&\!\!-\!\!\lambda^{-2}h((\nabla{\rm d}\phi)(\xi,\mathcal{B}\eta),{\rm d}\phi(\omega W))\!\!+\!\!\lambda^{-2}h((\nabla{\rm d}\phi(\eta,\mathcal{B}\xi),{\rm d}\phi(\omega W)) \\
&\!\!+\!\!\lambda^{-2}h\{\!-\!\xi(\ln\lambda){\rm d}\phi(\mathcal{C}\eta)\!\!-\!\!\mathcal{C}\eta(\ln\lambda){\rm d}\phi(\xi)\!\!+\!\!g(\xi,\mathcal{C}\eta){\rm d}\phi(\nabla\ln\lambda)
\!\!+\!\!\nabla^{\phi}_{\xi}{\rm d}\phi(\mathcal{C}\eta),{\rm d}\phi(\omega W)\} \\
&\!\!-\!\!\ \lambda^{-2}h\{\!-\!\eta(\ln\lambda){\rm d}\phi(\mathcal{C}\xi)\!\!-\!\!\mathcal{C}\xi(\ln\lambda){\rm d}\phi(\eta)\!\!+\!\!g(\eta,\mathcal{C}\xi){\rm d}\phi(\nabla\ln\lambda)
\!\!+\!\!\nabla^{\phi}_{\eta}{\rm d}\phi(\mathcal{C}\xi),{\rm d}\phi(\omega W)\}
\end{align*}
by virtue of (\ref{nxv}) and (\ref{nfixy}) 
\begin{align*}
g([\xi,\eta],&W)=g(\mathcal{V}\nabla^{1}_{\xi}\mathcal{B}\eta-\mathcal{V}
\nabla^{1}_{\eta}\mathcal{B}\xi+\mathcal{A}_{\xi}\mathcal{C}\eta-\mathcal{A}_{\eta}\mathcal{C}\xi,\varphi W)\\
&-\lambda^{-2}h\big((\nabla{\rm d}\phi)(\xi,\mathcal{B}\eta)-(\nabla{\rm d}\phi)(\eta,\mathcal{B}\xi),{\rm d}\phi(\omega W)\big)\\
&-\lambda^{-2}g(\nabla\ln\lambda,\xi)h({\rm d}\phi(\mathcal{C}\eta),{\rm d}\phi(\omega W)) 
-\lambda^{-2}g(\nabla\ln\lambda,\mathcal{C}\eta)h({\rm d}\phi(\xi),{\rm d}\phi(\omega W)) \\
&+\lambda^{-2}g(\xi,\mathcal{C}\eta)h({\rm d}\phi(\nabla\ln\lambda),{\rm d}\phi(\omega W))
+\lambda^{-2}h(\nabla^{\phi}_{\xi}{\rm d}\phi(\mathcal{C}\eta),{\rm d}\phi(\omega W)) \\
&+\lambda^{-2}g(\nabla\ln\lambda,\eta)h({\rm d}\phi(\mathcal{C}\xi),{\rm d}\phi(\omega W)) 
+\lambda^{-2}g(\nabla\ln\lambda,\mathcal{C}\xi)h({\rm d}\phi(\eta),{\rm d}\phi(\omega W)) \\
&-\lambda^{-2}g(\eta,\mathcal{C}\xi)h({\rm d}\phi(\nabla\ln\lambda),{\rm d}\phi(\omega W)) 
-\lambda^{-2}h(\nabla^{\phi}_{\eta}{\rm d}\phi(\mathcal{C}\xi),{\rm d}\phi(\omega W))
\end{align*}
A straight computation yields
\begin{align*} 
g([\xi,&\eta],W)\!\!=\!\!g\big(\eta(\ln\lambda)\mathcal{C}\xi\!-\!\xi(\ln\lambda)\mathcal{C}\eta\!-\!\mathcal{C}\eta(\ln\lambda)\xi\!\!+\!\!\mathcal{C}\xi(\ln\lambda)\eta
\!\!+\!\!2g(\xi,\mathcal{C}\eta)\nabla\ln\lambda,\omega W\big) \\
&+g(-\varphi(\mathcal{V}\nabla^{1}_{\xi}\mathcal{B}\eta-\mathcal{V}\nabla^{1}_{\eta}\mathcal{B}\xi+\mathcal{A}_{\xi}\mathcal{C}\eta-\mathcal{A}_{\eta}\mathcal{C}\xi),W)\notag \\
&-\lambda^{-2}h\big((\nabla{\rm d}\phi)(\xi,\mathcal{B}\eta)-(\nabla{\rm d}\phi)(\eta,\mathcal{B}\xi)-\nabla^{\phi}_{\xi}{\rm d}\phi(\mathcal{C}\eta)
+\nabla^{\phi}_{\eta}{\rm d}\phi(\mathcal{C}\xi),{\rm d}\phi(\omega W)\big)\notag
\end{align*}
so that 
\begin{align} \label{yint2}
g([\xi,\eta],W)=&0 \Longleftrightarrow \lambda^{-2}h\big((\nabla{\rm d}\phi)(\xi,\mathcal{B}\eta)-(\nabla{\rm d}\phi)(\eta,\mathcal{B}\xi)\\
&-\nabla^{\phi}_{\xi}{\rm d}\phi(\mathcal{C}\eta)+\nabla^{\phi}_{\eta}{\rm d}\phi(\mathcal{C}\xi),{\rm d}\phi(\omega W)\big)\notag\\
&=g\big(\eta(\ln\lambda)\mathcal{C}\xi\!-\!\xi(\ln\lambda)\mathcal{C}\eta\!-\!\mathcal{C}\eta(\ln\lambda)\xi\!\!+\!\!\mathcal{C}\xi(\ln\lambda)\eta
\!\!+\!\!2g(\xi,\mathcal{C}\eta)\nabla\ln\lambda,\omega W\big) \notag\\
&+g(-\varphi(\mathcal{V}\nabla^{1}_{\xi}\mathcal{B}\eta-\mathcal{V}\nabla^{1}_{\eta}\mathcal{B}\xi+\mathcal{A}_{\xi}\mathcal{C}\eta-\mathcal{A}_{\eta}\mathcal{C}\xi),W)\notag 
\end{align}
By using (\ref{yint1}) and (\ref{yint2}) we obtain  $(i) \Longleftrightarrow (ii), (i) \Longleftrightarrow (iii)$ which completes the proof.
\end{proof}

From Theorem \ref{dinteg}, we deduce
\begin{theorem}
Let $\phi$ be a conformal generic submersion from a K\"{a}hler manifold 
$(M_1,g_1,J)$ to a Riemannian manifold $(M_2,g_2)$ with integrable distribution $({\rm kerd}\phi)^\perp$. If 
$\phi$ is a horizontally homothetic map then we have
\begin{align}
&\lambda^{-2}h\big((\nabla{\rm d}\phi)(\xi,\mathcal{B}\eta)-(\nabla{\rm d}\phi)(\eta,\mathcal{B}\xi)-\nabla^{\phi}_{\xi}{\rm d}\phi(\mathcal{C}\eta)
+\nabla^{\phi}_{\eta}{\rm d}\phi(\mathcal{C}\xi),{\rm d}\phi(\omega W)\big)\\
&=g(-\varphi(\mathcal{V}\nabla^{1}_{\xi}\mathcal{B}\eta-\mathcal{V}\nabla^{1}_{\eta}\mathcal{B}\xi+\mathcal{A}_{\xi}\mathcal{C}\eta-\mathcal{A}_{\eta}\mathcal{C}\xi),W)\notag 
\end{align}
for $\xi,\eta\in\Gamma(({\rm kerd}\phi)^{\perp})$ and $V\in\Gamma({\rm kerd}\phi)$.
\end{theorem}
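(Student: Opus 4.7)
The plan is to derive this statement as a direct specialization of Theorem \ref{dinteg}(iii) under the additional horizontally homothetic hypothesis. Recall that a horizontally conformal submersion is called \emph{horizontally homothetic} precisely when $\nabla\ln\lambda$ lies in $\Gamma(\ker d\phi)$; equivalently, $X(\ln\lambda)=0$ for every $X\in\Gamma((\ker d\phi)^\perp)$. Since the integrability of $(\ker d\phi)^\perp$ is assumed, condition (iii) of Theorem \ref{dinteg} is in force, and the task reduces to showing that the extra scalar-curvature-type terms on its right-hand side vanish identically.

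First I would observe that, by the very definition of $\mathcal{C}$ in equation (\ref{jx}), we have $\mathcal{C}\xi,\mathcal{C}\eta\in\Gamma(\mu)\subset\Gamma((\ker d\phi)^\perp)$ whenever $\xi,\eta\in\Gamma((\ker d\phi)^\perp)$. Consequently, all four horizontal derivatives of $\ln\lambda$ appearing on the right-hand side of (iii) vanish:
\[
\xi(\ln\lambda)=\eta(\ln\lambda)=\mathcal{C}\xi(\ln\lambda)=\mathcal{C}\eta(\ln\lambda)=0.
\]
This eliminates the first four terms of the block
$
\eta(\ln\lambda)\mathcal{C}\xi-\xi(\ln\lambda)\mathcal{C}\eta-\mathcal{C}\eta(\ln\lambda)\xi+\mathcal{C}\xi(\ln\lambda)\eta+2g(\xi,\mathcal{C}\eta)\nabla\ln\lambda.
$

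Second I would dispose of the remaining term $2g(\xi,\mathcal{C}\eta)\,g(\nabla\ln\lambda,\omega W)$. Since $\omega W\in\Gamma((\ker d\phi)^\perp)$ by (\ref{jv}), while horizontal homotheticity forces $\nabla\ln\lambda\in\Gamma(\ker d\phi)$, orthogonality of the vertical and horizontal distributions yields $g(\nabla\ln\lambda,\omega W)=0$. Thus the entire first summand on the right-hand side of Theorem \ref{dinteg}(iii) drops out.

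What is left after these cancellations is exactly the identity claimed in the theorem, namely
\[
\lambda^{-2}h\bigl((\nabla{\rm d}\phi)(\xi,\mathcal{B}\eta)-(\nabla{\rm d}\phi)(\eta,\mathcal{B}\xi)-\nabla^{\phi}_{\xi}{\rm d}\phi(\mathcal{C}\eta)+\nabla^{\phi}_{\eta}{\rm d}\phi(\mathcal{C}\xi),{\rm d}\phi(\omega W)\bigr)
\]
\[
=g\bigl(-\varphi(\mathcal{V}\nabla^{1}_{\xi}\mathcal{B}\eta-\mathcal{V}\nabla^{1}_{\eta}\mathcal{B}\xi+\mathcal{A}_{\xi}\mathcal{C}\eta-\mathcal{A}_{\eta}\mathcal{C}\xi),W\bigr).
\]
There is no genuine obstacle in this argument; the only subtle points to verify are that $\mathcal{C}\xi$ and $\mathcal{C}\eta$ are horizontal (so that they annihilate $\ln\lambda$ under the homothetic hypothesis) and that $\omega W$ is horizontal (so that it is orthogonal to the now-vertical vector $\nabla\ln\lambda$), both of which are immediate from the splittings (\ref{jv}) and (\ref{jx}).
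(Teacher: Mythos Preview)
Your proposal is correct and follows exactly the approach the paper intends: the paper simply writes ``From Theorem \ref{dinteg}, we deduce'' and gives no further argument, so the intended proof is precisely to specialize condition (iii) of Theorem \ref{dinteg} under the horizontally homothetic hypothesis and observe that all the $\ln\lambda$-terms vanish. Your verification that $\xi,\eta,\mathcal{C}\xi,\mathcal{C}\eta,\omega W$ are horizontal while $\nabla\ln\lambda$ is vertical is the complete justification.
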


\begin{remark}
From the above result, one can easily see that a conformal generic submersion with integrable $({\rm kerd}\phi)^\perp$ turns out to
be a horizontally homothetic submersion.
\end{remark}

For the geometry of leaves of the horizontal distribution, we have the
following theorem.

\begin{theorem}\label{ypar}
Let $\phi$ be a conformal generic submersion from a K\"{a}hler
manifold $(M,g,J)$ to a Riemannian manifold $(B,h)$. Then the following conditions are equivalent:
\begin{enumerate}
\item[(i)] the horizontal distribution defines a totally geodesic foliation on $M.$
\item[(ii)] $\lambda^{-2}h((\nabla {\rm d}\phi)(\xi,JV),{\rm d}\phi(\eta))=g(\eta,\mathcal{V}\nabla^{1}_{\xi}JW)$
\item[(iii)] $
\begin{aligned}[t]
&{\lambda^{-2}}h(\nabla^{\phi}_{\xi}{\rm }\phi(\omega V_2),{\rm d}\phi(\mathcal{C}\eta))=-g(\varphi(\mathcal{A}_{\xi}\mathcal{C}\eta+\mathcal{V}\nabla^{1}_{\xi}\mathcal{B}\eta),V_2)\\
&+g(\mathcal{A}_{\xi}\mathcal{B}\eta-\xi(\ln\lambda)\mathcal{C}\eta- \mathcal{C}\eta(\ln\lambda)\xi+g(\xi,\mathcal{C}\eta)\nabla\ln\lambda,\omega V_2) 
\end{aligned}$
\end{enumerate}
for $\xi,\eta\in\Gamma(({\rm kerd}\phi)^{\perp})$ and $V\in\Gamma(\mathcal{D})$.
\end{theorem}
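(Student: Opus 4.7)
The horizontal distribution $({\rm kerd}\phi)^\perp$ defines a totally geodesic foliation on $M$ if and only if $g(\nabla^1_\xi\eta, V)=0$ for every $\xi,\eta\in\Gamma(({\rm kerd}\phi)^\perp)$ and every $V\in\Gamma({\rm kerd}\phi)$. The plan is to exploit the decomposition ${\rm kerd}\phi=\mathcal{D}\oplus\mathcal{D}^\perp$ and test this condition separately against $V\in\Gamma(\mathcal{D})$, which will yield (ii), and against $V_2\in\Gamma(\mathcal{D}^\perp)$, which will yield (iii).

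For the first case, I would combine the K\"ahler identity \eqref{e.q:2.2}, the $J$-invariance \eqref{J}(ii) of $g$, and the fact that $J\mathcal{D}=\mathcal{D}$ to rewrite
\[
g(\nabla^1_\xi\eta,V)=g(\nabla^1_\xi J\eta,JV)=-g(J\eta,\nabla^1_\xi JV).
\]
Decomposing $J\eta=\mathcal{B}\eta+\mathcal{C}\eta$ via \eqref{jx} and $\nabla^1_\xi JV=\mathcal{A}_\xi JV+\mathcal{V}\nabla^1_\xi JV$ via \eqref{nxv}, vertical/horizontal orthogonality kills the mixed-parity pairings. Converting $g(\mathcal{C}\eta,\mathcal{A}_\xi JV)$ through Lemma \ref{lem1}(iii) and the horizontal-conformal relation $h(d\phi X,d\phi Y)=\lambda^2 g(X,Y)$ produces the second-fundamental-form quantity on the left of (ii), while the other summand becomes its right-hand side.

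For the second case, I would apply the same K\"ahler manipulation with $V_2\in\Gamma(\mathcal{D}^\perp)$ and expand $JV_2=\varphi V_2+\omega V_2$ through \eqref{jv}, with $\varphi V_2\in\Gamma(\mathcal{D}^\perp)$ vertical and $\omega V_2\in\Gamma(\omega\mathcal{D}^\perp)$ horizontal. Splitting $\nabla^1_\xi\mathcal{B}\eta$ via \eqref{nxv} and $\nabla^1_\xi\mathcal{C}\eta$ via \eqref{nxy} and grouping terms by parity gives
\[
g(\nabla^1_\xi\eta,V_2)=g(\mathcal{V}\nabla^1_\xi\mathcal{B}\eta+\mathcal{A}_\xi\mathcal{C}\eta,\varphi V_2)+g(\mathcal{A}_\xi\mathcal{B}\eta+\mathcal{H}\nabla^1_\xi\mathcal{C}\eta,\omega V_2).
\]
The skew-symmetry of $\varphi$ on the vertical subbundle (a consequence of $g(JX,Y)=-g(X,JY)$ and $\omega\mathcal{D}^\perp\perp{\rm kerd}\phi$) converts the first pairing into $-g(\varphi(\mathcal{A}_\xi\mathcal{C}\eta+\mathcal{V}\nabla^1_\xi\mathcal{B}\eta),V_2)$, matching the first summand on the right of (iii), and the term $g(\mathcal{A}_\xi\mathcal{B}\eta,\omega V_2)$ appears verbatim.

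The hard part is repackaging the final term $g(\mathcal{H}\nabla^1_\xi\mathcal{C}\eta,\omega V_2)$ into the form prescribed by (iii). My plan is to first use the orthogonality $\mu\perp\omega\mathcal{D}^\perp$ from \eqref{y}, so that $g(\mathcal{C}\eta,\omega V_2)=0$, and then invoke \eqref{nfixy} to write $d\phi(\nabla^1_\xi\omega V_2)=\nabla^\phi_\xi d\phi(\omega V_2)-(\nabla d\phi)(\xi,\omega V_2)$. Applying Lemma \ref{lem1}(i) to the horizontal pair $(\xi,\omega V_2)$ and taking the $h$-inner product with $d\phi(\mathcal{C}\eta)$ generates exactly the logarithmic corrections $-\xi(\ln\lambda)\mathcal{C}\eta-\mathcal{C}\eta(\ln\lambda)\xi+g(\xi,\mathcal{C}\eta)\nabla\ln\lambda$ (after pairing with the remaining horizontal vector via the conformal identity), while the bulk term collapses to $\lambda^{-2}h(\nabla^\phi_\xi d\phi(\omega V_2),d\phi(\mathcal{C}\eta))$. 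The principal challenge is careful sign tracking (arising from $J^2=-I$ and the skew-symmetry of $\varphi$) and consistent bookkeeping across the four component projections $\mathcal{D}$, $\mathcal{D}^\perp$, $\omega\mathcal{D}^\perp$, and $\mu$.
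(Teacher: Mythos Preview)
Your proposal is correct and follows essentially the same strategy as the paper: split the vertical test vector along $\mathcal{D}\oplus\mathcal{D}^\perp$ and compute $g(\nabla^1_\xi\eta,\cdot)$ in each piece using \eqref{nxv}, \eqref{nxy}, \eqref{nfixy}, and Lemma \ref{lem1}.

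There is one minor deviation worth flagging. For the $\mathcal{D}$-direction (condition (ii)), the paper does \emph{not} invoke the K\"ahler identity or decompose $J\eta$; it writes a generic element of $\mathcal{D}$ as $JV_1$, uses only metric compatibility and $g(\eta,JV_1)=0$ to get $g(\nabla^1_\xi\eta,JV_1)=-g(\eta,\nabla^1_\xi JV_1)$, and then splits $\nabla^1_\xi JV_1$ into horizontal and vertical parts, converting the horizontal part via \eqref{nfixy} (with $d\phi(JV_1)=0$). This lands directly on (ii) with $d\phi(\eta)$ on the left and $g(\eta,\cdot)$ on the right. Your route---first applying $J$ in both slots and then decomposing $J\eta=\mathcal{B}\eta+\mathcal{C}\eta$---is logically valid but yields $d\phi(\mathcal{C}\eta)$ and $g(\mathcal{B}\eta,\cdot)$ rather than the literal expressions in (ii); so your claim that it ``produces the second-fundamental-form quantity on the left of (ii)'' is not quite accurate as stated. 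Since the right-hand side of (ii) vanishes identically ($\eta$ is horizontal), both formulations reduce to the same condition and no harm is done, but to match the stated form exactly you should skip the decomposition of $J\eta$. For condition (iii) the two arguments coincide up to a cosmetic choice: the paper expands $(\nabla d\phi)(\xi,\mathcal{C}\eta)$ via Lemma \ref{lem1}(i) rather than $(\nabla d\phi)(\xi,\omega V_2)$, but because $g(\mathcal{C}\eta,\omega V_2)=0$ the logarithmic corrections agree.
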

\begin{proof}
Given $\xi,\eta\in\Gamma(({\rm kerd}\phi)^\perp)$ and $JV_1\in\Gamma(\mathcal{D})$, by virtue of (\ref{J})(ii), (\ref{nxv}), (\ref{nxy}), (\ref{jv}) and (\ref{jx}) we obtain
\begin{align*}
g(\nabla^{1}_{\xi}\eta,JV_1)&=-g(\eta, \mathcal{H}\nabla^{1}_{\xi}JV_1+\mathcal{V}\nabla^{1}_{\xi}JV_1)\\
&=\lambda^{-2}h((\nabla {\rm d}\phi)(\xi,JV_1),{\rm d}\phi(\eta))-g(\eta,\mathcal{V}\nabla^{1}_{\xi}JV_1)
\end{align*}
so that 
\begin{align}\label{ypar1}
g(\nabla^{1}_{\xi}\eta,JV_1)=0 \Longleftrightarrow \lambda^{-2}h((\nabla {\rm d}\phi)(\xi,JV_1),{\rm d}\phi(\eta))=g(\eta,\mathcal{V}\nabla^{1}_{\xi}JV_1).
\end{align}
Given $V_2\in\Gamma(\mathcal{D}_2),$ by using (\ref{J})(ii), (\ref{nxv}), (\ref{nxy}), (\ref{jv}) and (\ref{jx}) we get
\begin{align*}
g(\nabla^{1}_{\xi}\eta,V_2)&=-g(\varphi(\mathcal{A}_{\xi}\mathcal{C}\eta+\mathcal{V}%
\nabla^{1}_{\xi}\mathcal{B}\eta),V_2) -g(\mathcal{B}\eta,\nabla^{1}_{\xi}%
\omega V_2)+g(\nabla^{1}_{\xi}\mathcal{C}\eta,\omega V_2)\\
&=-g(\varphi(\mathcal{A}_{\xi}\mathcal{C}\eta+\mathcal{V}%
\nabla^{1}_{\xi}\mathcal{B}\eta),V_2)-g(\mathcal{B}\eta,\mathcal{A}_{\xi}\omega V_2) \\
&-{\lambda^{-2}}g(\nabla \ln\lambda,\xi)h({\rm d}\phi(\omega V_2),{\rm d}\phi(\mathcal{C}\eta)) 
-{\lambda^{-2}}g(\nabla \ln\lambda,\mathcal{C}\eta)h({\rm d}\phi(\xi),{\rm d}\phi(\omega V_2))\\
&+g(\xi,\mathcal{C}\eta){\lambda^{-2}}h({\rm d}\phi(\nabla\ln\lambda),{\rm d}\phi(\omega V_2))
+{\lambda^{-2}}h(\nabla^{\phi}_{\xi}{\rm d}\phi(\mathcal{C}\eta),{\rm d}\phi(\omega V_2))
\end{align*}
so that
\begin{align}  \label{ypar2}
g(\nabla^{1}_{\xi}\eta,V_2)&=g(\mathcal{A}_{\xi}\mathcal{B}\eta-\xi(\ln\lambda)\mathcal{C}\eta- \mathcal{C}\eta(\ln\lambda)\xi+g(\xi,\mathcal{C}\eta)\nabla\ln\lambda,\omega V_2)  \\
&-g(\varphi(\mathcal{A}_{\xi}\mathcal{C}\eta+\mathcal{V}\nabla^{^{M_1}}_{\xi}\mathcal{B}\eta),V_2) -{\lambda^{-2}}h(\nabla^{\phi}_{\xi}{\rm }\phi(\omega V_2),{\rm d}\phi(\mathcal{C}\eta)).\notag
\end{align}
From (\ref{ypar1}) and (\ref{ypar2}) we get $(i) \Longleftrightarrow (ii), (i) \Longleftrightarrow (iii) \ \textrm{and} \ (ii) \Longleftrightarrow (iii)$ which completes the proof.
\end{proof}

From Theorem \ref{ypar}, we immediately deduce

\begin{theorem}
Let $\phi$ be a conformal generic submersion from a K\"{a}hler manifold $%
(M_1,g_{1},J)$ to a Riemannian manifold $(M_2,g_{2})$ with a totally geodesic foliation $({\rm kerd}\phi)^\perp.$ If $\phi$ is a horizontally homothetic map.
then we have
\begin{align}\label{ypar3}
\lambda^{-2}h(\nabla^{\phi}_{\xi}{\rm d}\phi(\omega V_2),{\rm d}\phi(\mathcal{C}\eta))&=
g(\mathcal{A}_{\xi}\mathcal{B}\eta,\omega V_2)-g(\varphi(\mathcal{A}_{\xi}\mathcal{C}\eta+\mathcal{V}\nabla^{1}_{\xi}\mathcal{B}\eta),V_2)\}
\end{align}
for any $\xi,\eta\in\Gamma(({\rm kerd}\phi)^\perp)$ and $V\in\Gamma({\rm kerd}\phi)$.
\end{theorem}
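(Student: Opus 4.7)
The plan is to derive this identity as an immediate corollary of part (iii) of Theorem \ref{ypar}, using horizontal homotheticity to annihilate every term in the right-hand side that contains the dilation.

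First I would recall that a horizontally conformal submersion is horizontally homothetic precisely when the dilation $\lambda$ is constant along horizontal directions, equivalently $\nabla\ln\lambda\in\Gamma({\rm kerd}\phi)$, or equivalently $\zeta(\ln\lambda)=0$ for every $\zeta\in\Gamma(({\rm kerd}\phi)^\perp)$. I would then invoke Theorem \ref{ypar}(iii), which under the assumed total geodesicity of $({\rm kerd}\phi)^\perp$ gives
\begin{align*}
\lambda^{-2}h(\nabla^{\phi}_{\xi}{\rm d}\phi(\omega V_2),{\rm d}\phi(\mathcal{C}\eta))&=-g(\varphi(\mathcal{A}_{\xi}\mathcal{C}\eta+\mathcal{V}\nabla^{1}_{\xi}\mathcal{B}\eta),V_2)\\
&\quad+g(\mathcal{A}_{\xi}\mathcal{B}\eta-\xi(\ln\lambda)\mathcal{C}\eta-\mathcal{C}\eta(\ln\lambda)\xi+g(\xi,\mathcal{C}\eta)\nabla\ln\lambda,\omega V_2).
\end{align*}

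Next I would observe, using (\ref{jx}) and (\ref{y}), that $\xi$, $\mathcal{C}\eta\in\Gamma(\mu)\subset\Gamma(({\rm kerd}\phi)^\perp)$, and $\omega V_2\in\Gamma(\omega\mathcal{D}^\perp)\subset\Gamma(({\rm kerd}\phi)^\perp)$ are all horizontal. Horizontal homotheticity then forces $\xi(\ln\lambda)=0$ and $\mathcal{C}\eta(\ln\lambda)=0$; moreover, since $\nabla\ln\lambda$ is vertical and $\omega V_2$ is horizontal, $g(\nabla\ln\lambda,\omega V_2)=0$. Substituting these three vanishings into the displayed formula removes all $\ln\lambda$ contributions from the second inner product, leaving only $g(\mathcal{A}_{\xi}\mathcal{B}\eta,\omega V_2)$ there and thereby producing the stated identity.

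The step I anticipate as the main (and only minor) obstacle is the bookkeeping of which vectors are horizontal versus vertical, so that horizontal homotheticity may be applied to kill each of the three dilation terms individually. Once that is settled, the proof is a direct substitution into Theorem \ref{ypar}(iii) and requires no further calculation.
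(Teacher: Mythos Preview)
Your proposal is correct and follows essentially the same route as the paper: the paper starts from equation (\ref{ypar2}) in the proof of Theorem \ref{ypar}, sets $g(\nabla^{1}_{\xi}\eta,V_2)=0$ by the totally geodesic hypothesis, and then observes that when $\lambda$ is constant along $({\rm kerd}\phi)^\perp$ the three dilation terms drop out, yielding (\ref{ypar3}). One small slip: you write ``$\xi,\ \mathcal{C}\eta\in\Gamma(\mu)$'', but $\xi$ is only assumed to lie in $\Gamma(({\rm kerd}\phi)^\perp)$, not in $\mu$; this does not affect the argument, since all you need is that $\xi$ is horizontal so that $\xi(\ln\lambda)=0$.
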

\begin{proof}
Since $(ker\phi_{*})^\perp$ defines a totally geodesic foliation on $M_1$, from  (\ref{ypar2}) we have
\begin{align*}  
g(\nabla^{1}_{\xi}\eta,V_2)&=g(\mathcal{A}_{\xi}\mathcal{B}\eta-\xi(\ln\lambda)\mathcal{C}\eta- \mathcal{C}\eta(\ln\lambda)\xi+g(\xi,\mathcal{C}\eta)\nabla\ln\lambda,\omega V_2)  \\
&-g(\varphi(\mathcal{A}_{\xi}\mathcal{C}\eta+\mathcal{V}\nabla^{^{M_1}}_{\xi}\mathcal{B}\eta),V_2) -{\lambda^{-2}}h(\nabla^{\phi}_{\xi}{\rm }\phi(\omega V_2),{\rm d}\phi(\mathcal{C}\eta))\notag
\end{align*}
for any $\xi,\eta\in\Gamma(({\rm kerd}\phi)^\perp)$ and $V_2\in\Gamma({\rm kerd}\phi).$ 
Now, one can easily see that if $\lambda$ is a constant on $({\rm kerd}\phi)^\perp$, we obtain (\ref{ypar3}.)
\end{proof}

In the sequel we are going to investigate the geometry of leaves of the
distribution ${\rm kerd}\phi$.

\begin{theorem} \label{dpar}
 Let $\phi$ be a conformal generic submersion from a K\"{a}hler
manifold $(M,g,J)$ to a Riemannian manifold $(B,h)$. Then the vertical distribution defines a totally geodesic foliation on $M$
if and only if
$$\mathcal{T}_U{\varphi V}+\mathcal{H}\nabla^{1}_U{\omega V}\in\Gamma(\mu), \hat{\nabla}_U{\varphi V}+\mathcal{T}_U{\omega V}\in\Gamma(D_1)$$
\textrm{and} 
\begin{align*}
\lambda^{-2}h(\nabla^{\phi}_{\omega V}{\rm d}\phi(\omega U),{\rm d}\phi \xi)&=
g(-\mathcal{C}\mathcal{T}_{U}\varphi V-\mathcal{A}_{\omega V}\varphi U-g(\omega V,\omega U)\nabla(\ln\lambda),\xi)
\end{align*}
for any $U,V\in\Gamma({\rm kerd}\phi)$ and $\xi\in\Gamma(\mu).$
\end{theorem}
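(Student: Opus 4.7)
The claim characterises the totally geodesicity of the vertical foliation ${\rm kerd}\phi$, i.e., the vanishing of the O'Neill tensor $\mathcal{T}$ on vertical vector fields (recalled in the discussion after (\ref{nxy})). Equivalently, I must verify when $g(\nabla^{1}_U V, Y) = 0$ for all $U, V \in \Gamma({\rm kerd}\phi)$ and all horizontal $Y$. My strategy is to decompose the horizontal bundle as $({\rm kerd}\phi)^{\perp} = \omega \mathcal{D}^{\perp} \oplus \mu$ and to test this orthogonality against each summand separately.

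The key identity I shall use throughout is the K\"{a}hler relation $J\nabla^{1}_U V = \nabla^{1}_U JV$, combined with the isometry $g(X,Y) = g(JX, JY)$: this allows me to trade $g(\nabla^{1}_U V, Y)$ for $g(\nabla^{1}_U JV, JY)$. I then expand $JV = \varphi V + \omega V$ via (\ref{jv}) and apply the O'Neill formulas (\ref{nvw}) and (\ref{nvx}) to write
\[
\nabla^{1}_U JV = \bigl(\hat{\nabla}_U \varphi V + \mathcal{T}_U \omega V\bigr) + \bigl(\mathcal{T}_U \varphi V + \mathcal{H}\nabla^{1}_U \omega V\bigr),
\]
the first bracket being vertical and the second horizontal.

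For $Y = \xi \in \Gamma(\mu)$, the vector $J\xi$ again lies in $\mu$ (since $\mu$ is $J$-invariant), so only the horizontal bracket survives the pairing; demanding its orthogonality to $J\mu = \mu$ for every $\xi$ yields the first condition on $\mathcal{T}_U\varphi V + \mathcal{H}\nabla^{1}_U\omega V$. For $Y = \omega Z$ with $Z \in \Gamma(\mathcal{D}^{\perp})$, I use (\ref{jx}) to write $J\omega Z = \mathcal{B}\omega Z + \mathcal{C}\omega Z$, with vertical part $\mathcal{B}\omega Z \in \mathcal{D}^{\perp}$ and horizontal part $\mathcal{C}\omega Z \in \mu$. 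Pairing the vertical bracket with $\mathcal{B}\omega Z$ gives the second condition $\hat{\nabla}_U \varphi V + \mathcal{T}_U\omega V \in \Gamma(\mathcal{D})$; pairing the horizontal bracket with $\mathcal{C}\omega Z$ produces the third (conformal) condition.

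To bring the conformal factor $\lambda$ into the picture in the last step, I convert the horizontal Levi-Civita derivative $\mathcal{H}\nabla^{1}_U \omega V$ via the basic-vector-field identity $\mathcal{H}\nabla^{1}_U \omega V = \mathcal{A}_{\omega V} U$ (valid by the standing assumption that all horizontal vector fields are basic), and then rewrite the remaining horizontal inner product as a pushforward using conformality $h({\rm d}\phi X, {\rm d}\phi Y) = \lambda^{2} g(X, Y)$ together with formula (\ref{nfixy}) for the second fundamental form. The $\ln\lambda$-gradient term $g(\omega V, \omega U)\nabla\ln\lambda$ appearing in condition (iii) arises from Lemma \ref{lem1}(i) applied to $(\nabla {\rm d}\phi)(\omega V, \omega U)$, whereas the terms $\mathcal{C}\mathcal{T}_U \varphi V$ and $\mathcal{A}_{\omega V}\varphi U$ come out of the $\varphi$-parts of the decomposition after invoking Lemma \ref{lem1}(iii) (using the symmetry of $\nabla {\rm d}\phi$ to switch horizontal and vertical entries). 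The main obstacle is precisely this bookkeeping: every transfer of a horizontal derivative to the base via ${\rm d}\phi$ generates $\ln\lambda$-gradient corrections, and these must be collected consistently. Once assembled, summing the three equivalences (over $\mu$ and over $\omega\mathcal{D}^{\perp}$) gives the necessary and sufficient characterisation of $\mathcal{T}\equiv 0$ asserted in the theorem.
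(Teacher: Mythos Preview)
Your overall plan is sound, but the assignment of conditions to pairings is scrambled, and step~1 contains a concrete error.

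\textbf{The error in step 1.} You pair the horizontal bracket $\mathcal{T}_U\varphi V + \mathcal{H}\nabla^{1}_U\omega V$ against $J\xi\in\mu$ and then say that demanding orthogonality to $J\mu=\mu$ ``yields the first condition'' $\mathcal{T}_U\varphi V + \mathcal{H}\nabla^{1}_U\omega V\in\Gamma(\mu)$. But orthogonality of a horizontal vector to all of $\mu$ forces it into the \emph{complement} $\omega\mathcal{D}^{\perp}$, not into $\mu$. So your step~1, taken literally, produces the wrong inclusion.

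\textbf{What the paper actually does with the $\mu$-pairing.} In the paper, testing against $\xi\in\Gamma(\mu)$ is exactly what produces the \emph{third} (conformal) condition, not the first. After writing $g(\nabla^{1}_U V,\xi)=g(\nabla^{1}_U\varphi V+\nabla^{1}_U\omega V,J\xi)$, the paper does not stop at the horizontal bracket; it pushes the $\omega V$-term further using the basic-field identity and torsion-freeness (effectively replacing $\mathcal{H}\nabla^{1}_U\omega V$ by terms involving $\nabla^{1}_{\omega V}\omega U$), then converts that horizontal inner product to $h$ via conformality and Lemma~\ref{lem1}(i). That is where the $\nabla\ln\lambda$ term, the $\mathcal{C}\mathcal{T}_U\varphi V$ term, and the $\mathcal{A}_{\omega V}\varphi U$ term all appear. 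You describe precisely this bookkeeping, but you attach it to step~3 instead of step~1.

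\textbf{Step 3 is redundant.} Your step~3 pairs the horizontal bracket with $\mathcal{C}\omega Z\in\mu$. But once step~1 has already tested orthogonality against all of $\mu$, pairing with the particular elements $\mathcal{C}\omega Z$ adds nothing; and conversely $\mathcal{C}\omega Z$ need not span $\mu$, so this pairing alone cannot recover the full condition~3 (which is stated for arbitrary $\xi\in\Gamma(\mu)$). The paper does not perform this split: its $\omega Z$-computation reduces (after noting $\mathcal{C}(\cdot)\perp\omega Z$) to the single condition $\omega(\hat{\nabla}_U\varphi V+\mathcal{T}_U\omega V)=0$, i.e.\ condition~2, in agreement with your step~2.

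In short: keep your step~2, move all of the conformal-factor bookkeeping into the $\xi\in\mu$ case to obtain condition~3, and drop the separate step~3.
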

\begin{proof}
Given any $U,V\in\Gamma({\rm kerd}\phi)$ and $\xi\in\Gamma(\mu)$, by
using (\ref{J})(ii) and (\ref{jv}) we get
\begin{align*}
g(\nabla^{1}_UV,\xi)=g(\nabla^{1}_U{\varphi}V+\nabla^{1}_{U}{\omega}V,J\xi).
\end{align*}
Now, by using (\ref{nvw}) we have
\begin{align*}
g(\nabla^{1}_UV,\xi)=g(-\mathcal{C}\mathcal{T}_{U}\varphi V,\xi)-g(\mathcal{A}_{\omega V}\varphi U,\xi)-\lambda^{-2}h({\rm d}\phi(\nabla^{1}_{\omega V}\omega U),{\rm d}\phi \xi)
\end{align*}
so that
\begin{align}\label{dpar1}
g(\nabla^{1}_UV,\xi)&=g(-\mathcal{C}\mathcal{T}_{U}\varphi V,\xi)-g(\mathcal{A}_{\omega V}\varphi U,\xi)-\lambda^{-2}h(\nabla^{{\rm d}\phi}_{\omega V}{\rm d}\phi(\omega U)
,{\rm d}\phi \xi)\\
&-g(\omega V,\omega U)\lambda^{-2}h({\rm d}\phi(\nabla\ln\lambda),{\rm d}\phi(\xi))\notag
\end{align}
which tells that
\begin{align*}
g(\nabla^{1}_UV,\xi)&=g(-\mathcal{C}\mathcal{T}_{U}\varphi V-\mathcal{A}_{\omega V}\varphi U-g(\omega V,\omega U)\nabla(\ln\lambda),\xi)\\
&-\lambda^{-2}h(\nabla^{\phi}_{\omega V}{\rm d}\phi(\omega U),{\rm d}\phi \xi).
\end{align*}
Given for any $U,V\in\Gamma({\rm kerd}\phi)$ and $Z\in\Gamma(\mathcal{D}^\perp)$, by using (\ref{J})(ii), (\ref{jv}) and (\ref{nvw}) we get
\begin{align*}
g(\nabla^{1}_UV,\omega Z)&=-g(J\nabla^{1}_U{JV},\omega Z)\\
&=-g(J(\mathcal{T}_U{\varphi V}+\hat{\nabla}_U{\varphi V}+\mathcal{T}_U{\omega V}+\mathcal{H}\nabla^{1}_U{\omega V}),\omega Z)\\
&=-g(\mathcal{C}(\mathcal{T}_U{\varphi V}+\mathcal{H}\nabla^{1}_U{\omega V})+\omega(\hat{\nabla}_U{\varphi V}+\mathcal{T}_U{\omega V}),\omega Z)
\end{align*}
\end{proof}
From Theorem \ref{dpar}, we have
\begin{theorem}
Let $\phi$ be a conformal generic submersion from a K\"{a}hler manifold $%
(M,g,J)$ to a Riemannian manifold $(B,h)$. Then any two
conditions below imply the third;
\begin{enumerate}
\item[(i)] The vertical distribution defines a totally geodesic foliation on $M$.
\item[(ii)] $\lambda$ is a constant on $\Gamma(\mu)$.
\item[(iii)] $%
\begin{aligned}[t]
\lambda^{-2}h(\nabla^{\phi}_{\omega V}{\rm d}\phi(\omega U),{\rm d}\phi \xi)&=
g(-\mathcal{C}\mathcal{T}_{U}\varphi V\xi)+g(\mathcal{A}_{\omega V}\xi,\varphi U)
\end{aligned}$
\end{enumerate}
for $U,V\in\Gamma(ker\phi_{*})$ and $X\in\Gamma((ker\phi_{*})^\perp)$.
\end{theorem}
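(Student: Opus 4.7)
The plan is to leverage the core identity derived inside the proof of Theorem \ref{dpar}, namely equation (\ref{dpar1}) together with its follow-up line, which can be read as a single master formula
\begin{equation*}
g(\nabla^{1}_{U}V,\xi) = g(-\mathcal{C}\mathcal{T}_{U}\varphi V,\xi) - g(\mathcal{A}_{\omega V}\varphi U,\xi) - g(\omega V,\omega U)\,g(\nabla\ln\lambda,\xi) - \lambda^{-2}h(\nabla^{\phi}_{\omega V}\mathrm{d}\phi(\omega U),\mathrm{d}\phi\xi)
\end{equation*}
for $U,V\in\Gamma(\mathrm{ker}\mathrm{d}\phi)$ and $\xi\in\Gamma(\mu)$. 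This one identity contains each of (i), (ii), (iii) as the vanishing of a different piece, so the theorem reduces to bookkeeping.

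First I would rewrite the second term on the right using the skew-symmetry of O'Neill's tensor $\mathcal{A}_{\omega V}$ recorded in Section 2, turning $-g(\mathcal{A}_{\omega V}\varphi U,\xi)$ into $+g(\mathcal{A}_{\omega V}\xi,\varphi U)$. The identity then becomes
\begin{equation*}
g(\nabla^{1}_{U}V,\xi) + g(\omega V,\omega U)\,g(\nabla\ln\lambda,\xi) = g(-\mathcal{C}\mathcal{T}_{U}\varphi V,\xi) + g(\mathcal{A}_{\omega V}\xi,\varphi U) - \lambda^{-2}h(\nabla^{\phi}_{\omega V}\mathrm{d}\phi(\omega U),\mathrm{d}\phi\xi).
\end{equation*}
Conditions (i), (ii), (iii) assert, respectively, the vanishing of the first summand on the left, the vanishing of the second summand on the left, and the balance of the right-hand terms with the leading term of (iii). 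Any two of the three conditions therefore force the third by direct substitution.

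Two of the three implications are immediate: (i)+(ii)$\Rightarrow$(iii) and (ii)+(iii)$\Rightarrow$(i) are just substitutions into the rewritten identity. The delicate one is (i)+(iii)$\Rightarrow$(ii), where the assumptions together leave $g(\omega V,\omega U)\,g(\nabla\ln\lambda,\xi)=0$ for all admissible $U,V,\xi$, and I need to extract $g(\nabla\ln\lambda,\xi)=0$ on $\Gamma(\mu)$. For this I would invoke the fact that $\mathcal{D}^{\perp}$ is purely real in the sense of Definition \ref{def}: because $\mathcal{D}=\mathrm{ker}\mathrm{d}\phi\cap J(\mathrm{ker}\mathrm{d}\phi)$, one has $\mathcal{D}^{\perp}\cap J\mathcal{D}^{\perp}=\{0\}$, so every nonzero $U\in\Gamma(\mathcal{D}^{\perp})$ has $\omega U\neq 0$; taking $V=U$ yields $g(\omega U,\omega U)>0$ pointwise, which forces $g(\nabla\ln\lambda,\xi)=0$ for every $\xi\in\Gamma(\mu)$, i.e.\ condition (ii).

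The main obstacle is essentially notational: the statement of (iii) does not pin down the range of $\xi$, while the derivation in Theorem \ref{dpar} is carried out specifically for $\xi\in\Gamma(\mu)$, since $\mathcal{C}\mathcal{T}_{U}\varphi V$ lies in $\mu$ and the $\omega\mathcal{D}^{\perp}$-part of the totally geodesic condition is already handled by the two vertical membership conditions stated in Theorem \ref{dpar}. Once this is understood, no additional geometric input beyond the master identity and the skew-symmetry of $\mathcal{A}$ is required, and the argument closes.
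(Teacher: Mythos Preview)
Your proposal is correct and follows essentially the same route as the paper: both rest on the identity (\ref{dpar1}) from the proof of Theorem \ref{dpar}, read as a three-term relation whose pieces correspond to (i), (ii), (iii). The paper's own proof only spells out the implication (i)$+$(iii)$\Rightarrow$(ii) and dismisses the remaining cases as easy; you supply the missing details, in particular the skew-symmetry rewrite of $-g(\mathcal{A}_{\omega V}\varphi U,\xi)$ and the nonvanishing argument for $g(\omega U,\omega U)$ via $\mathcal{D}^{\perp}\cap J\mathcal{D}^{\perp}=\{0\}$, as well as the observation that $\xi$ should range over $\Gamma(\mu)$.
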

\begin{proof}
In view of Eq. (\ref{dpar1}), if we have (i) and (iii), then we have that $$g(\omega V,\omega U)g(\nabla(\ln\lambda),\xi)=0,$$
which tells that $\lambda$ is a constant on $\Gamma(\mu)$. One can easily get the other assertions.
\end{proof}

From Theorem \ref{ypar} and Theorem \ref{dpar}, we have the following result.
\begin{theorem}
Let $\phi:(M,g,J)\longrightarrow (B,h)$ be a conformal generic
submersion from a K\"{a}hler manifold $(M,g,J)$ onto a Riemannian
manifold $(B,h)$. Then the total space $M$ is a generic product
manifold of the leaves of ${\rm kerd}\phi$ and $({\rm kerd}\phi)^\perp,$ i.e., $M=M{%
_{{\rm kerd}\phi}}\times M{_{({\rm kerd}\phi)^\perp}},$ if and only if
$$\lambda^{-2}h((\nabla {\rm d}\phi)(\xi,JV),{\rm d}\phi(\eta))=g(\eta,\mathcal{V}\nabla^{1}_{\xi}JW),$$
\begin{align*}
&{\lambda^{-2}}h(\nabla^{\phi}_{\xi}{\rm }\phi(\omega V_2),{\rm d}\phi(\mathcal{C}\eta))=-g(\varphi(\mathcal{A}_{\xi}\mathcal{C}\eta+\mathcal{V}\nabla^{1}_{\xi}\mathcal{B}\eta),V_2)\\
&+g(\mathcal{A}_{\xi}\mathcal{B}\eta-\xi(\ln\lambda)\mathcal{C}\eta- \mathcal{C}\eta(\ln\lambda)\xi+g(\xi,\mathcal{C}\eta)\nabla\ln\lambda,\omega V_2) 
\end{align*}
\textrm{and}
$$\mathcal{T}_U{\varphi V}+\mathcal{H}\nabla^{1}_U{\omega V}\in\Gamma(\mu), \hat{\nabla}_U{\varphi V}+\mathcal{T}_U{\omega V}\in\Gamma(D_1),$$
\begin{align*}
\lambda^{-2}h(\nabla^{\phi}_{\omega V}{\rm d}\phi(\omega U),{\rm d}\phi \xi)&=
g(-\mathcal{C}\mathcal{T}_{U}\varphi V-\mathcal{A}_{\omega V}\varphi U-g(\omega V,\omega U)\nabla(\ln\lambda),\xi)
\end{align*}
for any $\xi,\eta\in\Gamma(({\rm kerd}\phi)^\perp),$ $U,V,V_2\in\Gamma({\rm kerd}\phi)$,
where $M{_{{\rm kerd}\phi}}$ and $M{_{({\rm kerd}\phi)^\perp}}$ are leaves of the
distributions ${\rm kerd}\phi$ and $({\rm kerd}\phi)^\perp$, respectively.
\end{theorem}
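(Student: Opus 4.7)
The plan is to reduce the statement to the classical (local) de Rham-type decomposition: the total space $M$ splits as a Riemannian product $M_{{\rm kerd}\phi}\times M_{({\rm kerd}\phi)^\perp}$ of the leaves of the two complementary orthogonal distributions if and only if both distributions are integrable and each defines a totally geodesic foliation on $M$. Since ${\rm kerd}\phi$ is automatically integrable (as the vertical distribution of a submersion), and since, by Remark after Theorem \ref{ypar}, a conformal generic submersion with integrable horizontal distribution is horizontally homothetic so that $({\rm kerd}\phi)^\perp$ is integrable whenever it defines a totally geodesic foliation, the whole problem reduces to characterizing when each of the two distributions defines a totally geodesic foliation.

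First I would appeal to Theorem \ref{ypar}: the horizontal distribution $({\rm kerd}\phi)^\perp$ defines a totally geodesic foliation on $M$ if and only if
\[
\lambda^{-2}h((\nabla {\rm d}\phi)(\xi,JV),{\rm d}\phi(\eta))=g(\eta,\mathcal{V}\nabla^{1}_{\xi}JW)
\]
together with the condition
\begin{align*}
\lambda^{-2}h(\nabla^{\phi}_{\xi}{\rm d}\phi(\omega V_2),{\rm d}\phi(\mathcal{C}\eta))
&=-g(\varphi(\mathcal{A}_{\xi}\mathcal{C}\eta+\mathcal{V}\nabla^{1}_{\xi}\mathcal{B}\eta),V_2)\\
&\quad +g(\mathcal{A}_{\xi}\mathcal{B}\eta-\xi(\ln\lambda)\mathcal{C}\eta-\mathcal{C}\eta(\ln\lambda)\xi+g(\xi,\mathcal{C}\eta)\nabla\ln\lambda,\omega V_2),
\end{align*}
which are exactly the first two displayed equations in the statement.

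Next I would invoke Theorem \ref{dpar}: the vertical distribution ${\rm kerd}\phi$ defines a totally geodesic foliation on $M$ if and only if
\[
\mathcal{T}_U\varphi V+\mathcal{H}\nabla^{1}_U\omega V\in\Gamma(\mu),\qquad \hat{\nabla}_U\varphi V+\mathcal{T}_U\omega V\in\Gamma(D_1),
\]
and
\[
\lambda^{-2}h(\nabla^{\phi}_{\omega V}{\rm d}\phi(\omega U),{\rm d}\phi\xi)=g(-\mathcal{C}\mathcal{T}_{U}\varphi V-\mathcal{A}_{\omega V}\varphi U-g(\omega V,\omega U)\nabla(\ln\lambda),\xi),
\]
which reproduce precisely the last three displayed conditions in the statement.

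Combining these two characterizations with the de Rham decomposition yields the theorem: the conjunction of the four conditions is equivalent to both foliations being totally geodesic, which is equivalent to $M$ being locally isometric to the product of a leaf of ${\rm kerd}\phi$ and a leaf of $({\rm kerd}\phi)^\perp$. The main obstacle I anticipate is not computational but conceptual, namely justifying the use of the local de Rham splitting in the present setting: one must verify that the integrability of $({\rm kerd}\phi)^\perp$ follows from its totally geodesic condition (via Theorem \ref{dinteg} and the horizontally homothetic remark), so that the two totally geodesic foliations actually produce a local product structure; once this is in place, the proof is a direct citation of Theorem \ref{ypar} and Theorem \ref{dpar}.
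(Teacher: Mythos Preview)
Your proposal is correct and follows exactly the paper's approach: the paper itself gives no separate proof for this theorem, merely stating that it follows from Theorem \ref{ypar} and Theorem \ref{dpar}. In fact you supply more justification than the paper does, by explicitly invoking the de Rham-type splitting and noting that integrability of $({\rm kerd}\phi)^\perp$ is implied by the totally geodesic condition.
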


\section{\textbf{Totally geodesicity and Harmonicity of conformal generic submersions}}

In this section, we investigate the necessary and sufficient conditions for
such submersions to be totally geodesicity and harmonicity, respectively. We first give the following  definition.

\subsection{Totally geodesicity of $\phi:(M,g,J)\longrightarrow (B,h)$} 
\begin{definition}
Let $\phi$ be a conformal generic submersion from a K\"{a}hler manifold $%
(M,g,J)$ to a Riemannian manifold $(B,h)$. Then $\phi$ is called
a $(\omega \mathcal{D}^\perp,\mu)$-totally geodesic map if
\begin{equation*}
(\nabla {\rm d}\phi)(\omega Z,\xi)=0, for \ Z\in\Gamma(\mathcal{D}^\perp)\ and \ \xi\in\Gamma(\mu).
\end{equation*}
\end{definition}

The following result show that the above definition has an important effect on the character of the conformal generic submersion.

\begin{theorem}
Let $\phi$ be a conformal generic submersion from a K\"{a}hler manifold $%
(M,g,J)$ to a Riemannian manifold $(B,h)$. Then $\phi$ is a $(\omega \mathcal{D}^\perp,\mu)$-totally geodesic map if and only if $\phi$ is a
\textit{horizontally homotetic map}. Then the following conditions are equivalent:
\begin{enumerate}
\item[(i)] $\phi$ is a \textit{horizontally homothetic map}.
\item[(ii)] $\phi$ is a $(\omega \mathcal{D}^\perp,\mu)$-totally geodesic map. 
\end{enumerate}
\end{theorem}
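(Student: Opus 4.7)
The plan is to reduce everything to Lemma 2.3(i), which computes $(\nabla d\phi)$ on pairs of horizontal vectors in terms of $\ln\lambda$, and to exploit the orthogonal decomposition $(\ker d\phi)^\perp=\omega\mathcal{D}^\perp\oplus\mu$ together with the fact that $d\phi$ is a conformal isomorphism on horizontal vectors.

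First I would take $Z\in\Gamma(\mathcal{D}^\perp)$ and $\xi\in\Gamma(\mu)$; since $\omega Z\in\omega\mathcal{D}^\perp$ and $\xi\in\mu$ lie in complementary, orthogonal pieces of $(\ker d\phi)^\perp$, we have $g(\omega Z,\xi)=0$. Applying Lemma 2.3(i) with the two arguments $\omega Z$ and $\xi$ therefore yields
\begin{equation*}
(\nabla d\phi)(\omega Z,\xi)=\omega Z(\ln\lambda)\,d\phi(\xi)+\xi(\ln\lambda)\,d\phi(\omega Z),
\end{equation*}
the third term dropping out by orthogonality. This single identity is the crux of both directions.

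For the forward direction, if $\phi$ is horizontally homothetic then $\nabla\ln\lambda$ is vertical, so $\omega Z(\ln\lambda)=g(\omega Z,\nabla\ln\lambda)=0$ and $\xi(\ln\lambda)=0$; the displayed identity then gives $(\nabla d\phi)(\omega Z,\xi)=0$, which is the $(\omega\mathcal{D}^\perp,\mu)$-totally geodesic condition. For the converse, suppose $(\nabla d\phi)(\omega Z,\xi)=0$ for all such $Z$ and $\xi$. Since $\phi$ is a horizontally conformal submersion, $d\phi$ is a conformal linear isomorphism from $(\ker d\phi)^\perp$ onto the tangent space of the base, so the images $d\phi(\omega Z)$ and $d\phi(\xi)$ are orthogonal (hence linearly independent whenever $\omega Z\neq 0$ and $\xi\neq 0$). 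Taking the inner product in the base with $d\phi(\xi)$ and $d\phi(\omega Z)$ separately forces $\omega Z(\ln\lambda)=0$ for every $Z\in\Gamma(\mathcal{D}^\perp)$ and $\xi(\ln\lambda)=0$ for every $\xi\in\Gamma(\mu)$.

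The final step is to assemble these two conclusions: $\nabla\ln\lambda$ is orthogonal to $\omega\mathcal{D}^\perp$ and to $\mu$, hence orthogonal to all of $(\ker d\phi)^\perp$, i.e., $\nabla\ln\lambda\in\Gamma(\ker d\phi)$. This is precisely the definition of a horizontally homothetic map. I do not expect a real obstacle here; the only delicate point is the separation-of-coefficients step, which requires noting that $\omega Z$ and $\xi$ sit in orthogonal summands of $(\ker d\phi)^\perp$ and that conformality of $d\phi$ on horizontal vectors preserves this orthogonality in the base, allowing us to isolate each of $\omega Z(\ln\lambda)$ and $\xi(\ln\lambda)$.
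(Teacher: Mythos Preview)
Your proposal is correct and follows essentially the same approach as the paper: both apply Lemma~2.3(i) to the pair $(\omega Z,\xi)$, drop the $g(\omega Z,\xi)$ term by orthogonality, and then separate the two remaining coefficients. The only cosmetic difference is that the paper simply invokes linear independence of $\{d\phi(\xi),d\phi(\omega Z)\}$, whereas you justify this more carefully via conformality and orthogonality before taking inner products.
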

\begin{proof}
Given $Z\in\Gamma(\mathcal{D}^\perp)$ and $\xi\in\Gamma(\mu)$, by Lemma \ref{lem1}, we
have
\begin{align*}
(\nabla {\rm d}\phi)(\omega Z,\xi)&=\omega Z(\ln\lambda){\rm d}\phi(\xi)+\xi(\ln\lambda){\rm d}\phi(\omega Z)-g(\omega Z,\xi){\rm d}\phi(\nabla\ln\lambda).\\
&=\omega Z(\ln\lambda){\rm d}\phi(\xi)+\xi(\ln\lambda){\rm d}\phi(\omega Z).
\end{align*}
From above equation, we easily get $(i) \Longrightarrow (ii).$ Conversely, if $(\nabla {\rm d}\phi)(\omega Z,\xi)=0,$ we get
\begin{equation}
\omega Z(\ln\lambda){\rm d}\phi(\xi)+\xi(\ln\lambda){\rm d}\phi(\omega Z)=0.  \label{e.q:4.2}
\end{equation}
From above equation, since \{${\rm d}\phi(\xi), {\rm d}\phi(\omega Z)$\}  is linearly independent for non-zero $\xi, Z=\{0\},$ we have $\omega Z(\ln\lambda)=0$ and $\xi(\ln\lambda).$ It means that $\lambda$ is a constant on $\Gamma(\mathcal{D}^\perp)$ and $\Gamma(\mu),$ which gives that $(i) \Longleftarrow (ii).$ This completes the proof of the theorem.
\end{proof}

We also have the following result.

\begin{theorem}
Let $\phi:(M,g,J)\longrightarrow (B,h)$ is a conformal generic
submersion, where $(M,g,J)$ is a K\"{a}hler manifold and $(B,h)$ is
a Riemannian manifold. Then the following conditions are equivalent:
\begin{enumerate}
\item[(i)] $\phi$ is a totally geodesic map.
\item[(ii)] $\mathcal{C}\mathcal{T}_{U}JV+\omega\hat{\nabla}_{U}JV=0 for \;U,V\in\Gamma(\mathcal{D}).$
\item[(iii)] $\mathcal{T}_{U}\varphi Z+\mathcal{A}_{\omega Z}U\in\Gamma(\omega \mathcal{D}^\perp)$ \textrm{and}
$\hat{\nabla}_{U}\varphi Z+\mathcal{T}_{U}\omega Z\in\Gamma(\mathcal{D}),$ for $U\in\Gamma(\mathcal{D}),\;Z\in\Gamma(\mathcal{D}^\perp).$
\item[(iv)] $\mathcal{T}_{V}\mathcal{B}\xi+\mathcal{H}\nabla^{1}_{V}\mathcal{C}\xi\in\Gamma(\omega\mathcal{D}^\perp)$
\;\textrm{and}\; $\hat{\nabla}_{V}\mathcal{B}\xi+\mathcal{T}_{V}\mathcal{C}\xi\in\Gamma(J\mathcal{D})$, for $V\in\Gamma({\rm kerd}\phi),\; \xi\in\Gamma(({\rm kerd}\phi)^\perp)$
\item[(v)] $\phi$ is a horizontally homotetic map.
\end{enumerate}
\end{theorem}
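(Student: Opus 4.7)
The plan is to establish the pairwise equivalence by proving the four hub equivalences $(i)\Longleftrightarrow(ii)$, $(i)\Longleftrightarrow(iii)$, $(i)\Longleftrightarrow(iv)$ and $(i)\Longleftrightarrow(v)$, from which every remaining pair of conditions follows by transitivity. Because $(\nabla{\rm d}\phi)$ is symmetric and bilinear, total geodesicity amounts to its vanishing on every pair drawn from the orthogonal summands $TM=\mathcal{D}\oplus\mathcal{D}^\perp\oplus({\rm kerd}\phi)^\perp$, and I will match each of (ii)--(v) to exactly one such sub-case by combining the three formulas for the second fundamental form of an HC submersion with the K\"ahler identity $\nabla^1 J=J\nabla^1$.

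For $(i)\Longleftrightarrow(v)$, the horizontal--horizontal formula $(\nabla{\rm d}\phi)(\xi,\eta)=\xi(\ln\lambda){\rm d}\phi(\eta)+\eta(\ln\lambda){\rm d}\phi(\xi)-g(\xi,\eta){\rm d}\phi(\nabla\ln\lambda)$ vanishes on all horizontal $\xi,\eta$ iff $\nabla\ln\lambda$ has no horizontal part, that is, $\phi$ is horizontally homothetic. For $(i)\Longleftrightarrow(ii)$, the vertical--vertical formula reduces the $\mathcal{D}\times\mathcal{D}$ sub-case to $\mathcal{T}_U V=0$; since $JV\in\Gamma(\mathcal{D})\subset\Gamma({\rm kerd}\phi)$, writing $\nabla^1_U V=-J\nabla^1_U JV$ and expanding by (\ref{nvw}), (\ref{jv}), (\ref{jx}) yields $\mathcal{T}_U V=-(\mathcal{C}\mathcal{T}_U JV+\omega\hat\nabla_U JV)$, matching (ii) exactly.

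For $(i)\Longleftrightarrow(iii)$, I take $U\in\Gamma(\mathcal{D})$ and $Z\in\Gamma(\mathcal{D}^\perp)$; the $\mathcal{D}\times\mathcal{D}^\perp$ sub-case again reduces to $\mathcal{T}_U Z=0$, and expanding $J\nabla^1_U Z=\nabla^1_U(\varphi Z+\omega Z)$ via (\ref{nvw}), (\ref{nvx}), (\ref{jv}), (\ref{jx}) and separating horizontal and vertical projections gives a pair of identities whose simultaneous vanishing is precisely the two membership relations in (iii). For $(i)\Longleftrightarrow(iv)$, I take $V\in\Gamma({\rm kerd}\phi)$ and $\xi\in\Gamma(({\rm kerd}\phi)^\perp)$; the mixed formula reduces this sub-case to $\mathcal{A}_\xi V=0$, and expanding $J\nabla^1_V\xi=\nabla^1_V(\mathcal{B}\xi+\mathcal{C}\xi)$ using (\ref{nvw}) and (\ref{nvx}) and matching horizontal and vertical projections produces exactly the two conditions in (iv).

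The main obstacle is projection bookkeeping: each K\"ahler expansion fragments into four pieces lying in the summands $\mathcal{D}$, $\mathcal{D}^\perp$, $\omega\mathcal{D}^\perp$, and $\mu$, and one must carefully verify that the target bundles $\omega\mathcal{D}^\perp$ and $J\mathcal{D}$ named in (iii) and (iv) are precisely the images produced by the horizontal/vertical splits. A secondary point is confirming that the remaining sub-case $V,W\in\Gamma(\mathcal{D}^\perp)$ is already subsumed by (iv): the K\"ahler identity rewrites $\mathcal{T}_V W$ in terms of $\mathcal{A}_{\omega W}V$ and $\mathcal{T}_V\omega W$ along the lines of the identities obtained earlier in Section 3, feeding this case into the vertical/horizontal mixed form, so that (ii)--(v) jointly exhaust all sub-cases.
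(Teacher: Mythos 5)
Your proposal follows essentially the same route as the paper: split $TM=\mathcal{D}\oplus\mathcal{D}^\perp\oplus({\rm kerd}\phi)^\perp$, observe that total geodesicity is the vanishing of the symmetric tensor $(\nabla{\rm d}\phi)$ on each pair of summands, and convert each block into one of (ii)--(v) by combining the second-fundamental-form formulas for a horizontally conformal submersion with the K\"ahler identity and the decompositions $J=\varphi+\omega$ on the vertical and $J=\mathcal{B}+\mathcal{C}$ on the horizontal distribution. Your versions of the individual computations are correct and coincide with the paper's (the paper writes each vertical block as the vanishing of ${\rm d}\phi(J\nabla^{1}J\,\cdot)$ rather than of $\mathcal{T}$ or $\mathcal{A}$ directly, but since ${\rm d}\phi$ is injective on horizontal vectors these are the same statements), and the horizontal--horizontal block giving (v) is handled identically by testing against ${\rm d}\phi(\xi)$, ${\rm d}\phi(J\xi)$ and ${\rm d}\phi(\omega U)$.

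The one concrete misstep is in your final paragraph: the residual sub-case $V,W\in\Gamma(\mathcal{D}^\perp)$ is a vertical--vertical block, governed by $(\nabla{\rm d}\phi)(V,W)=-{\rm d}\phi(\mathcal{T}_VW)$, so it cannot be ``fed into'' (iv), which is the genuinely mixed vertical--horizontal block governed by $\mathcal{A}_\xi V$. Expanding $\mathcal{T}_VW=-\mathcal{H}\bigl(J\nabla^{1}_{V}(\varphi W+\omega W)\bigr)$ shows that this case is exactly the condition of (iii) with the first argument allowed to range over all of $\Gamma({\rm kerd}\phi)$; this is in fact how the paper proves (iii) (its proof takes $U\in\Gamma({\rm kerd}\phi)$ even though the statement says $U\in\Gamma(\mathcal{D})$), and with that reading the blocks $\mathcal{D}\times\mathcal{D}^\perp$ and $\mathcal{D}^\perp\times\mathcal{D}^\perp$ are both absorbed there, so no appeal to (iv) is needed. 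Finally, be aware (this is a feature your argument shares with the paper's) that this strategy literally proves (i) $\Leftrightarrow$ the conjunction of (ii), (iii), (iv) and (v): each of (ii)--(v) controls only one block of $(\nabla{\rm d}\phi)$, so none of them singly implies (i), and the claimed pairwise equivalences should be read with that caveat.
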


\begin{proof}
 In view of Eq. (\ref{J})(ii) and Eq. (\ref{nfixy}) we have
\begin{align*}
(\nabla {\rm d}\phi)(U,V)={\rm d}\phi(J\nabla^{1}_{U}JV)
\end{align*}
for any $U,V\in\Gamma(\mathcal{D}).$ Then from Eq. (\ref{nvw}) we arrive at
\begin{equation*}
(\nabla {\rm d}\phi)(U,V)={\rm d}\phi(J(\mathcal{T}_{U}JV+\hat{\nabla}_{U}JV)).
\end{equation*}
Using Eq. (\ref{jv}) and Eq. (\ref{jx}) in above equation we obtain
\begin{equation*}
(\nabla {\rm d}\phi)(U,V)={\rm d}\phi(\mathcal{B}\mathcal{T}_{U}JV+\mathcal{C}\mathcal{T}_{U}JV+\varphi\hat{\nabla}_{U}JV+\omega\hat{\nabla}_{U}JV).
\end{equation*}
So 
\begin{equation}
(\nabla {\rm d}\phi)(U,V)=0 \Longleftrightarrow \mathcal{C}\mathcal{T}_{U}JV+\omega\hat{\nabla}_{U}JV=0.\label{tgm1}
\end{equation}

Given $U\in\Gamma({\rm kerd}\phi),\ Z\in\Gamma(\mathcal{D}^\perp)$, by  Eq. (\ref{J})(ii) and Eq. (\ref{nfixy}) we have
\begin{align*}
(\nabla {\rm d}\phi)(U,Z)={\rm d}\phi(J\nabla^{1}_{U}JZ).
\end{align*}
By Eq. (\ref{nvw}), Eq. (\ref{nvx}) and Eq. (\ref{jv}) yields
\begin{equation*}
(\nabla {\rm d}\phi)(U,Z)={\rm d}\phi(J(\mathcal{T}_{U}\varphi Z+\hat{\nabla}_{U}\varphi Z+\mathcal{T}_{U}\omega Z+\mathcal{A}_{\omega Z}U)).
\end{equation*}
where we have used $\mathcal{H}\nabla_{\omega Z}U=\mathcal{A}_{\omega Z}U.$ By using Eq. (\ref{jv}) and Eq. (\ref{jx}) in above equation we obtain
\begin{align*}
(\nabla {\rm d}\phi)(U,Z)&={\rm d}\phi(\mathcal{B}\mathcal{T}_{U}\varphi Z+\mathcal{C}\mathcal{T}_{U}\varphi Z
+\varphi\hat{\nabla}_{U}\varphi Z+\omega\hat{\nabla}_{U}\varphi Z \\
&+\varphi \mathcal{T}_{U}\omega Z+\omega \mathcal{T}_{U}\omega Z+\mathcal{B}\mathcal{A}_{\omega Z}U+\mathcal{C}\mathcal{A}_{\omega Z}U).
\end{align*}
So 
\begin{equation}
(\nabla {\rm d}\phi)(U,Z)=0\Longleftrightarrow \mathcal{C}(\mathcal{T}_{U}\varphi Z+\mathcal{A}_{\omega Z}U)+\omega(\hat{\nabla}_{U}\varphi Z+\mathcal{T}_{U}\omega Z)=0.\label{tgm2}
\end{equation}

Given $V\in\Gamma({\rm kerd}\phi),\ \xi\in\Gamma(({\rm kerd}\phi)^\perp)$, by  Eq. (\ref{J})(ii), Eq. (\ref{nfixy}), Eq. (\ref{nvw}), Eq. (\ref{nvx}) and Eq. (\ref{jv}) yields
\begin{align*}
(\nabla {\rm d}\phi)(V,\xi)&={\rm d}\phi(J\nabla^{1}_{V}J\xi).\\
&={\rm d}\phi(J(\nabla^{1}_{V}\mathcal{B}\xi+\nabla^{1}_{V}\mathcal{C}\xi)).\\
&={\rm d}\phi(J(\mathcal{T}_{V}\mathcal{B}\xi+\hat{\nabla}_{V}\mathcal{B}\xi+\mathcal{T}_{V}\mathcal{C}\xi+\mathcal{H}\nabla^{1}_{V}\mathcal{C}\xi)).\\
&={\rm d}\phi(\mathcal{C}(\mathcal{T}_{V}\mathcal{B}\xi+\mathcal{H}\nabla^{1}_{V}\mathcal{C}\xi)+\omega(\hat{\nabla}_{V}\mathcal{B}\xi+\mathcal{T}_{V}\mathcal{C}\xi)).
\end{align*}
So  
\begin{equation}
(\nabla {\rm d}\phi)(V,\xi)=0 \Longleftrightarrow \mathcal{T}_{V}\mathcal{B}\xi+\mathcal{H}\nabla^{1}_{V}\mathcal{C}\xi\in\Gamma(\omega\mathcal{D}^\perp)
\;\textrm{and}\; \hat{\nabla}_{V}\mathcal{B}\xi+\mathcal{T}_{V}\mathcal{C}\xi\in\Gamma(J\mathcal{D}).\label{tgm3}
\end{equation}

Now, we will show that for any $\xi,\eta\in\Gamma(\mu)$, $(\nabla {\rm d}\phi)(\xi,\eta)=0 \Longleftrightarrow \phi$ is a horizontally homothetic map.

Given $\xi,\eta\in\Gamma(\mu)$, from Lemma \ref{lem1}, we have
\begin{equation*}
(\nabla {\rm d}\phi)(\xi,\eta)=\xi(\ln\lambda){\rm d}\phi(\eta)+\eta(\ln\lambda){\rm d}\phi(\xi)-g(\xi,\eta){\rm d}\phi(\nabla\ln\lambda).
\end{equation*}
Taking $\eta=J\xi,\ \xi\in\Gamma(\mu)$ in the above equation we get
\begin{align*}
(\nabla {\rm d}\phi)(\xi,J\xi)&=\xi(\ln\lambda){\rm d}\phi(J\xi)+J\xi(\ln\lambda){\rm d}\phi(\xi)-g(\xi,J\xi){\rm d}\phi(\nabla\ln\lambda) \\
&=\xi(\ln\lambda){\rm d}\phi(J\xi)+J\xi(\ln\lambda){\rm d}\phi\xi.
\end{align*}
If $(\nabla {\rm d}\phi)(\xi,J\xi)=0,$ we get
\begin{equation}
\xi(\ln\lambda){\rm d}\phi(J\xi)+J\xi(\ln\lambda){\rm d}\phi\xi=0.  \label{tgm4}
\end{equation}
Taking inner product in Eq. (\ref{tgm4}) with ${\rm d}\phi(\xi)$ and taking into account $\phi$ is a conformal submersion, we have
\begin{equation*}
g(\nabla\ln\lambda,\xi)h({\rm d}\phi J\xi,{\rm d}\phi\xi)+g(\nabla\ln\lambda,J\xi)h({\rm d}\phi\xi,{\rm d}\phi\xi)=0.
\end{equation*}
which implies that $\lambda$ is a constant on $\Gamma(J\mu).$ On the other hand, taking inner product in Eq. (\ref{tgm4}) with ${\rm d}\phi (J\xi)$ we have
\begin{equation*}
g(\nabla\ln\lambda,\xi)h({\rm d}\phi J\xi,{\rm d}\phi J\xi)+g_{1}(\nabla\ln\lambda,\xi)h({\rm d}\phi \xi,{\rm d}\phi J\xi)=0.
\end{equation*}
which tells that $\lambda$ is a constant $\Gamma(\mu).$
In a similar way, for $U,V\in\Gamma(\mathcal{D}^\perp)$, by using Lemma \ref{lem1} we have
\begin{equation*}
(\nabla {\rm d}\phi)(\omega U,\omega V)=\omega U(\ln\lambda){\rm d}\phi(\omega V)+\omega V(\ln\lambda){\rm d}\phi(\omega U)
-g(\omega U,\omega V){\rm d}\phi(\nabla\ln\lambda).
\end{equation*}
From above equation, taking $V=U$ we obtain
\begin{align}
(\nabla {\rm d}\phi)(\omega U,\omega U)&=2\omega U(\ln\lambda){\rm d}\phi(\omega U)-g(\omega U,\omega U){\rm d}\phi(\nabla\ln\lambda). \label{tgm5} 
\end{align}
Taking inner product in Eq. (\ref{tgm5}) with ${\rm d}\phi(\omega U)$ and taking into account $\phi $ is a conformal submersion, we derive
\begin{equation*}
2g(\nabla\ln\lambda,\omega U)h({\rm d}\phi(\omega U),{\rm d}\phi(\omega U))
-g(\omega U,\omega U)h({\rm d}\phi(\nabla\ln\lambda),{\rm d}\phi(\omega U))=0
\end{equation*}
which tells that $\lambda$ is a constant on $\Gamma(\omega \mathcal{D}^\perp)$. Thus $\lambda$ is a constant on $\Gamma(({\rm kerd}\phi)^\perp)$.
By Eq. (\ref{tgm1}), Eq. (\ref{tgm2}), Eq. (\ref{tgm3}), Eq. (\ref{tgm4}), we have $(i) \Longleftrightarrow (ii), (i) \Longleftrightarrow (iii), (i)\Longleftrightarrow(iv), (i)\Longleftrightarrow(v).$
This completes the proof of the theorem.
\end{proof}

\subsection{Harmonicity of $\phi:(M,g,J)\longrightarrow (B,h)$}

Let  $\phi:N_1\longrightarrow N_2$ be a $C^{\infty}$ map between two Riemannian manifolds. We can naturally define a function $e(\phi)=N_1\longrightarrow [0, \infty]$ given by
$$e(\phi)(x)=\frac{1}{2}|(d\phi)_x|, x\in N_2$$
where $|(d\phi)_x|$ denotes the Hilbert-Schmidt norm of $(d\phi)_x$. We call $e(\phi)$ the energy density of $\phi$. Let $\Omega$ is the compact closure $\bar{U}$ of a non empty connected open subset $U$ of $N_1$. The energy integral of $\phi$ over $\Omega$ is the integral of its energy density:
$$E(\phi;\Omega) =\int_{\Omega}e(\phi) v_{g_N}=\int_{\Omega}\frac{1}{2}|(d\phi)_x| v_{g_N}$$
where $v_{g_N}$ is the volume form on $(N,g_N)$. Let $C^{\infty}(N_1,N_2)$ denote the space of all
differentiable map from $N_1$ on $N_2$. A differentiable map $\phi:N_1\longrightarrow N_2$ is said to harmonic if it is a critical point of the energy functional 
$E(\phi;\Omega):C^{\infty}(N_1,N_2)\longrightarrow \mathbb{R}$ for any compact domain $\Omega\subset N_1.$ By the result of J. Eells and J. Sampson [6], we know that the map $\phi$ is harmonic if and only if the tension field $$\tau(\phi) =trace(\nabla d\phi)=0.$$

\begin{theorem}
Let $\phi:(M,g,J)\longrightarrow (B,h)$ is a conformal generic
submersion, where $(M,g,J)$ is a K\"{a}hler manifold and $(B,h)$ is
a Riemannian manifold. Then $\phi$ is harmonic if and only if 
\begin{eqnarray*}
&&trace |_{(D_1)}-d\phi\Big(\mathcal{C}\mathcal{T}_{J(.)}(.)+\omega\hat{\nabla}_{J(.)}(.)\Big)+\\
&&trace |_{(D_2)} \ d\phi\Big(\mathcal{C}\mathcal{T}_{(.)}\varphi (.)+\omega\hat{\nabla}_{(.)}\varphi (.)+\omega \mathcal{T}_{(.)}\omega (.)
+\mathcal{C}\mathcal{H}\nabla^{1}_{(.)}\omega (.)\Big)\\
&&=trace |_{(kerd\phi)^{\perp}} \nabla^{\phi}_{(.)}d\phi(\mathcal{C}^{2}(.)+\omega\mathcal{B}(.))-\\
&&d\phi\Big(\mathcal{C}\mathcal{A}_{(.)}\mathcal{B}(.)+\mathcal{C}\mathcal{H}\nabla^{1}_{(.)}\mathcal{C}(.)+\omega \mathcal{A}_{(.)}\mathcal{C}(.)+\omega\mathcal{V}\nabla^{1}_{(.)}\mathcal{C}(.)\Big)
\end{eqnarray*}
\end{theorem}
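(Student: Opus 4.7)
The plan is to invoke the standard criterion that $\phi$ is harmonic if and only if the tension field $\tau(\phi)=\mathrm{trace}(\nabla d\phi)$ vanishes, and to evaluate this trace on an orthonormal local frame obtained by juxtaposing orthonormal frames of $\mathcal{D}$, of $\mathcal{D}^{\perp}$, and of $({\rm kerd}\phi)^{\perp}$. The three partial traces will supply exactly the three groups of terms appearing in the statement, so that $\tau(\phi)=0$ rearranges into the claimed identity.

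For the $\mathcal{D}$-trace, since $J\mathcal{D}=\mathcal{D}$ (Proposition~3.1), I would use the basis $\{Je_i\}$ obtained from an orthonormal frame $\{e_i\}$ of $\mathcal{D}$. Because $Je_i$ is vertical, (\ref{nfixy}) reduces to $(\nabla d\phi)(Je_i,Je_i)=-d\phi(\nabla^{1}_{Je_i}Je_i)$; the K\"ahler identity $\nabla J=0$ gives $\nabla^{1}_{Je_i}Je_i=-J(\mathcal{T}_{Je_i}e_i+\hat{\nabla}_{Je_i}e_i)$ after using (\ref{nvw}), and splitting $J$ via (\ref{jv}) and (\ref{jx}) as $\mathcal{B}+\mathcal{C}$ on horizontal outputs and $\varphi+\omega$ on vertical ones, the summands landing in ${\rm kerd}\phi$ are killed by $d\phi$. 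One is left with $(\nabla d\phi)(Je_i,Je_i)=-d\phi(\mathcal{C}\mathcal{T}_{Je_i}e_i+\omega\hat{\nabla}_{Je_i}e_i)$, whose sum produces the first trace in the identity. For the $\mathcal{D}^{\perp}$-trace I would take a frame $\{f_j\}$ of $\mathcal{D}^{\perp}$, perform the same reduction with $Jf_j=\varphi f_j+\omega f_j$, split $\nabla^{1}_{f_j}\varphi f_j$ via (\ref{nvw}) and $\nabla^{1}_{f_j}\omega f_j$ via (\ref{nvx}), and exploit the basic vector field identity $\mathcal{H}\nabla^{1}_{f_j}\omega f_j=\mathcal{A}_{\omega f_j}f_j$. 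A second application of $J$ followed by projection to the horizontal bundle leaves the four surviving terms $\mathcal{C}\mathcal{T}_{f_j}\varphi f_j$, $\omega\hat{\nabla}_{f_j}\varphi f_j$, $\omega\mathcal{T}_{f_j}\omega f_j$, $\mathcal{C}\mathcal{H}\nabla^{1}_{f_j}\omega f_j$, which match the second trace.

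The horizontal sub-trace is the substantive one. For $\xi\in({\rm kerd}\phi)^{\perp}$ I would write $(\nabla d\phi)(\xi,\xi)=\nabla^{\phi}_{\xi}d\phi(\xi)-d\phi(\nabla^{1}_{\xi}\xi)$, use the K\"ahler condition to replace $\nabla^{1}_{\xi}\xi=-J\nabla^{1}_{\xi}J\xi$, expand $J\xi=\mathcal{B}\xi+\mathcal{C}\xi$, and decompose $\nabla^{1}_{\xi}\mathcal{B}\xi=\mathcal{A}_{\xi}\mathcal{B}\xi+\mathcal{V}\nabla^{1}_{\xi}\mathcal{B}\xi$ via (\ref{nxv}) and $\nabla^{1}_{\xi}\mathcal{C}\xi=\mathcal{H}\nabla^{1}_{\xi}\mathcal{C}\xi+\mathcal{A}_{\xi}\mathcal{C}\xi$ via (\ref{nxy}). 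A further round of $J$-decomposition and removal of the vertical parts annihilated by $d\phi$ reduces $-d\phi(\nabla^{1}_{\xi}\xi)$ to $d\phi\bigl(\mathcal{C}\mathcal{A}_{\xi}\mathcal{B}\xi+\omega\mathcal{V}\nabla^{1}_{\xi}\mathcal{B}\xi+\mathcal{C}\mathcal{H}\nabla^{1}_{\xi}\mathcal{C}\xi+\omega\mathcal{A}_{\xi}\mathcal{C}\xi\bigr)$. The crucial trick is to convert the remaining term $\nabla^{\phi}_{\xi}d\phi(\xi)$ via the algebraic identity $\mathcal{C}^{2}+\omega\mathcal{B}=-\mathrm{id}$ on horizontal vectors (Proposition~3.1): this yields $\xi=-(\mathcal{C}^{2}\xi+\omega\mathcal{B}\xi)$ and consequently $\nabla^{\phi}_{\xi}d\phi(\xi)=-\nabla^{\phi}_{\xi}d\phi(\mathcal{C}^{2}\xi+\omega\mathcal{B}\xi)$, exactly the form demanded on the right-hand side of the identity.

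Summing the three partial traces and imposing $\tau(\phi)=0$ then produces the announced equivalence. The main obstacle is not a deep geometric obstruction but rather disciplined bookkeeping: after each application of $J$ one must route the output through the correct operator pair ($\mathcal{B}+\mathcal{C}$ for horizontal inputs, $\varphi+\omega$ for vertical ones), choose the right O'Neill tensor ($\mathcal{T}$ when the differentiation slot is vertical, $\mathcal{A}$ when it is horizontal), and discard precisely the summands that vanish under $d\phi$. The only non-mechanical step is the invocation of $\mathcal{C}^{2}+\omega\mathcal{B}=-\mathrm{id}$, which is what dictates the slightly awkward grouping of the horizontal trace in the statement.
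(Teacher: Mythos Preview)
Your proposal is correct and follows essentially the same route as the paper: split $\tau(\phi)=\mathrm{trace}(\nabla d\phi)$ over orthonormal frames of $\mathcal{D}$, $\mathcal{D}^{\perp}$ and $({\rm kerd}\phi)^{\perp}$, use the K\"ahler condition together with the decompositions (\ref{jv}), (\ref{jx}) and the O'Neill formulas (\ref{nvw})--(\ref{nxy}), and invoke the identity $\mathcal{C}^{2}+\omega\mathcal{B}=-\mathrm{id}$ from Proposition~3.1 for the horizontal block. One minor slip: the K\"ahler condition gives $\nabla^{1}_{Je_i}Je_i=+J(\mathcal{T}_{Je_i}e_i+\hat{\nabla}_{Je_i}e_i)$, not $-J(\ldots)$, though your final expression for that block is nonetheless the intended one.
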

\begin{proof}
For any $U\in\Gamma(\mathcal{D}_1),$ $V\in\Gamma(\mathcal{D}_2)$ and $\xi\in\Gamma(({\rm kerd}\phi)^\perp),$ by using Eq. (\ref{J})(i),  Eq. (\ref{nfixy}), Eq. (\ref{jx}), Eq. (\ref{jv}), and Proposition 3.1 (f) we have
\begin{eqnarray*}
&&(\nabla d\phi)(JU,JU)+(\nabla d\phi)(V,V)+(\nabla d\phi)(\xi,\xi)=-d\phi(J\nabla^{1}_{JU}V)\\
&&+d\phi(J(\nabla^{1}_{V}\varphi V+\nabla^{1}_{V}\omega V))-\nabla^{\phi}_{\xi}d\phi(\mathcal{C}^{2}\xi+\omega\mathcal{B}\xi)+d\phi(J(\nabla^{1}_{\xi}\mathcal{B}\xi
+\nabla^{1}_{\xi}\mathcal{C}\xi)).
\end{eqnarray*}
A straight computation by using Eq. (\ref{jx}), Eq. (\ref{jv}) and Eq. (\ref{nvw})-(\ref{nxy}), we obtain
\begin{eqnarray*}
&&(\nabla d\phi)(JU,JU)+(\nabla d\phi)(V,V)+(\nabla d\phi)(\xi,\xi)=-d\phi(\mathcal{C}\mathcal{T}_{JU}U+\omega\hat{\nabla}_{JU}U)\\
&&+d\phi(\mathcal{C}\mathcal{T}_{V}\varphi V+\omega\hat{\nabla}_{V}\varphi V+\omega \mathcal{T}_{V}\omega V+\mathcal{C}\mathcal{H}\nabla^{1}_{V}\omega V)\\
&&-\nabla^{\phi}_{\xi}d\phi(\mathcal{C}^{2}\xi+\omega\mathcal{B}\xi)+d\phi(\mathcal{C}\mathcal{A}_{\xi}\mathcal{B}\xi+\mathcal{C}\mathcal{H}\nabla^{1}_{\xi}\mathcal{C}\xi 
+\omega \mathcal{A}_{\xi}\mathcal{C}\xi+\omega\mathcal{V}\nabla^{1}_{\xi}\mathcal{C}\xi).
\end{eqnarray*}
Now,  by taking trace on the above equation, we obtain the proof of the theorem.
\end{proof}

\begin{remark}
One can easily see that the maps defined Example \ref{exm1} and Example \ref{exm2} are an example of harmonic map.
\end{remark}

\end{document}